\newcommand{\V}{{\mathfrak V}}
\newcommand{\Ni}{{\mathfrak N}}
\newcommand{\B}{{\mathfrak B}}
\newcommand{\M}{{\mathfrak M}}
\newcommand{\X}{{\mathfrak X}}
\newcommand{\Y}{{\mathfrak Y}}
\newcommand{\U}{{\mathfrak U}}
\newcommand{\A}{{\mathfrak A}}
\newcommand{\Z}{{\mathbb Z}}
\newcommand{\F}{{\mathbb F}}
\newcommand{\1}{\{1\}}
\newcommand{\var}[1]{\mathrm{var}\!\left( #1 \right)}
\newcommand{\varr}[1]{\mathrm{var}( #1 )}
\newcommand{\Wr}{\,\mathrm{Wr}\,}
\newcommand{\Wrr}{\,\mathrm{wr}\,}
 \newtheorem{thm}{Theorem}[section]
 \newtheorem{lem}[thm]{Lemma}
 \theoremstyle{definition}
 \theoremstyle{remark}
 \newtheorem{rem}[thm]{Remark}
 \newtheorem*{ex}{Example}
 \numberwithin{equation}{section}
\numberwithin{equation}{section}
\begin{document}

$\phantom{Line}$ 

\subjclass{20E22, 20E10, 20K01, 20K25, 20D15.}
\keywords{Wreath products, varieties of groups, $K_p$-series, products of varieties of groups, abelian groups, nilpotent groups, groups of finite exponent, critical groups, Frattini subgroup, Fitting subgroup.}

\title[A Classification Theorem for Varieties]{\large A Classification Theorem for Varieties Generated \\ by Wreath Products of Groups}

\author[V.H.~Mikaelian]{Vahagn H.~Mikaelian}




\date{\today}

\dedicatory{\small To Professor Alexander Yu.~Ol'shanskii, my teacher, on his 70'th anniversary}

\begin{abstract}
We suggest a criterion classifying all the cases when for a nilpotent group $A$ of a restricted exponent and for any abelian group $B$ the variety $\var{A \Wr B}$ generated by the wreath product $A \Wr B$ is equal to the product variety $\var{A}\var{B}$. 
%
%
%
This continues our previous research on  varieties generated by wreath products of other classes of groups (abelian groups, finite groups, etc.). 
The obtained theorem generalizes some known results in the literature considering the same problem for more restricted cases. Some applications of the criterion also are discussed.
\end{abstract}

\maketitle


\section{Introduction}
\label{Introduction}

\noindent
The main aim of this work is to suggest a criterion under which for a nilpotent group of restricted exponent $A$ and for an abelian group $B$ the wreath product $A \Wr B$ generates the product of varieties $\var{A}$ and $\var{B}$ generated by $A$ and $B$ respectively, or in other notation, a condition under which the equality
\begin{equation}
\tag{$*$} 
\label{EQUATION_main}    
\var{A \Wr B} = \var{A} \var{B}
\end{equation}
holds for the given groups $A$ and $B$. By default all the wreath products here are assumed to be Cartesian (complete) wreath products, although the analogs of  statements are true for direct (restricted) wreath products also. 
Denote by $C_n$ the cyclic group of order $n$, and for any group $G$ denote by $G^k$ or by $G^\infty$ the direct product of $k$ or of countably many copies of $G$ respectively. In these notations our main result is:

\begin{thm}
\label{Theorem wr nilpotent abelian}
Let $A$ be any nilpotent group of finite exponent $m$, and let $B$ be any abelian group. Then the equality \eqref{EQUATION_main} holds for $A$ and $B$ if and only if:
\begin{enumerate}
  \item[a)]  either the group $B$ is not of finite non-zero exponent;
  \item[b)] or $B$ is of some finite exponent $n > 0$, and it contains a subgroup isomorphic to the direct product $C_{d}^c  \times C_{n/d}^\infty$, where $c$ is the nilpotency class of $A$, and $d$ is the largest divisor of $n$ coprime with $m$.
\end{enumerate}
\end{thm}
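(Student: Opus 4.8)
The inclusion $\var{A \Wr B}\sub\var{A}\var{B}$ holds for arbitrary groups: in $A\Wr B$ the base group is the Cartesian power $A^{B}\in\var A$, with quotient $B\in\var B$, so $A\Wr B\in\var A\var B$. Hence the theorem is equivalent to deciding when $\var A\var B\sub\var{A\Wr B}$, and I would split this into the \emph{sufficiency} of (a) and (b), and their \emph{necessity}. Recall that $\var B=\A$ in case (a), whereas $\var B=\A\cap\B_{n}$ and the rank-$s$ free group of $\var B$ is $C_{n}^{s}$ in case (b).

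\medskip
\noindent\emph{Sufficiency.}
The first reduction is to wreath products of ``small'' top groups. By the standard description of a product of varieties as being generated by wreath products of free objects (via the Kaloujnine--Krasner embedding), and since any law involves only finitely many variables, it suffices to deal with the rank-$r$ free group $F_{r}(\var A)$ of $\var A$ and the rank-$s$ free group $F_{s}(\var B)$ of $\var B$ for finite $r,s$; moreover $F_{r}(\var A)$ embeds into a Cartesian power of $A$, so
\[
F_{r}(\var A)\Wr F_{s}(\var B)\ \le\ A^{t}\Wr F_{s}(\var B)\ \le\ \bigl(A\Wr F_{s}(\var B)\bigr)^{t}
\]
(with $F_{s}(\var B)$ acting diagonally). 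Thus $\var A\var B\sub\var{A\Wr B}$ is equivalent to $A\Wr F_{s}(\var B)\in\var{A\Wr B}$ for all finite $s$, i.e.\ to $A\Wr\Z^{s}\in\var{A\Wr B}$ in case (a), and to $A\Wr C_{n}^{s}\in\var{A\Wr B}$ in case (b). Case (a) then follows from (our earlier) results describing $\var{A\Wr\Z}$, and similarly for torsion $B$ of unbounded exponent, using that $B$ has an infinite cyclic subgroup or cyclic subgroups of unbounded order. In case (b) the key is that, $A$ being nilpotent of class $c$, the group $A\Wr C_{n}^{s}$ already lies in $\var{A\Wr(C_{d}^{\,c}\times C_{n/d}^{\,N})}$ for $N$ large enough in terms of $s$, and the latter is contained in $\var{A\Wr B}$ since $C_{d}^{\,c}\times C_{n/d}^{\,N}\le C_{d}^{\,c}\times C_{n/d}^{\infty}\le B$. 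The arithmetic of (b) is exactly what makes this work: as $\gcd(d,m)=1$ with $m=\exp A$, the algebra $(\Z/m)[C_{d}]$ is semisimple (Maschke), so the $C_{d}$-action on the base splits into ``eigen-directions'', and a class-$c$ group never needs more than $c$ of them; while for $p\mid m$ the augmentation ideal of $(\Z/m)[C_{p^{a}}]$ is nilpotent, so finitely many copies of $C_{n/d}$ yield only a bounded nilpotency degree of the corresponding action, and this degree must be allowed to grow with $s$ — hence $C_{n/d}^{\infty}$.

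\medskip
\noindent\emph{Necessity.}
Assume $B$ has finite exponent $n>0$ and, writing $n=d\cdot(n/d)$, that $B$ has no subgroup $\cong C_{d}^{\,c}\times C_{n/d}^{\infty}$. Passing to primary components, this forces either some $p$-component with $p\mid d$ (so $p\nmid m$) to have fewer than $c$ cyclic summands of maximal order, or some $p$-component with $p\mid n/d$ (so $p\mid m$) to be finite. In the first case one exhibits a verbal law of $A\Wr B$, of ``depth'' controlled by $c$, that fails in $\var A\var B$ — because this variety contains all $A\Wr C_{n}^{s}$, in which the full weight-$c$ commutator structure of $F_{r}(\var A)$ is realised over a base acted on, without collapse, by a $d$-coprime group of rank $c$. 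In the second case, $d$-th powers of group elements kill the $d$-coprime part of $B$ and fall into the augmentation ideal of $\Z/m$ over the finite $(n/d)$-part of $B$, which is nilpotent of some degree $N$; hence $A\Wr B$ satisfies a law asserting that a weight-$\le c$ commutator in one family of variables, further commutated with $N+1$ $d$-th powers of other variables, is trivial, whereas this law fails in $A\Wr C_{n/d}^{\,r}\in\var A\var B$ once $r>N$. The exactness of these two families of laws is why $c$ copies of $C_{d}$ and infinitely many copies of $C_{n/d}$ are both necessary.

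\medskip
\noindent\emph{Main obstacle.}
The real work is to make both constructions precise for a \emph{non-abelian} nilpotent $A$: then the base group of $A\Wr B_{0}$ is not a module but a class-$c$ nilpotent group on which $B_{0}$ acts, so the Maschke semisimplicity and the nilpotency of augmentation ideals have to be invoked successively along the $c$ terms of the lower central series of $A$ (equivalently, on its associated graded), all while controlling how the exponents $m$ and $n$ interact through each term — which is also the mechanism by which the answer ends up depending on $A$ only through $c$ and $m$. A further technical issue is that the wreath products are Cartesian, so one must check that Cartesian powers of $A$ discriminate $\var A$ and that the passage to a finitely generated $B_{0}\le B$ loses nothing.
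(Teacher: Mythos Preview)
Your plan diverges from the paper's proof in both directions, and the divergence exposes a genuine gap.

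\medskip
\textbf{The main gap (sufficiency of (b)).}
Your crux claim is that $A\Wr C_{n}^{s}\in\var{A\Wr(C_{d}^{\,c}\times C_{n/d}^{\,N})}$ for $N$ large, justified by ``Maschke semisimplicity of $(\Z/m)[C_{d}]$'' and the assertion that ``a class-$c$ group never needs more than $c$ eigen-directions''. But semisimplicity of the group algebra tells you about the action of $C_{d}^{s}$ on the \emph{abelian} sections of the base group; it says nothing a priori about why the \emph{top} group $C_{d}^{s}$ can be replaced by $C_{d}^{c}$ at the level of varieties. You acknowledge in your ``Main obstacle'' paragraph that the base is not a module but a class-$c$ nilpotent group, yet you do not indicate how the associated-graded argument would actually yield the bound $c$ on the rank of the coprime part of the top. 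This is precisely the hard content of the theorem, and your sketch does not supply it.

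The paper proves the analogous bound by an entirely different route: it analyses an arbitrary \emph{critical} group $K$ of $\var A\var B$, shows via Gasch\"utz's results on the Frattini and Fitting subgroups that a certain quotient is elementary abelian with at most $s$ minimal-normal summands, invokes the Oates--Powell theorem to force $s\le c$, and then uses Clifford's theorem to conclude that the coprime part $U$ of the top is a product of at most $c$ cyclic groups. Hence every critical group of $\var A\var B$ has top embedding into $C_{d}^{c}\times C_{n/d}^{\infty}\le B$, and locally finite varieties are generated by their critical groups. The bound $c$ thus emerges from Oates--Powell, not from any module-theoretic semisimplicity; your Maschke heuristic does not recover it.

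\medskip
\textbf{Necessity.}
Your plan (produce explicit laws distinguishing $A\Wr B$ from $\var A\var B$) is plausible but again different and unproved. The paper instead localises: it intersects $\var{A\Wr B}$ with $\var{S_{p}}\A_{q^{v}}$ or $\var{S_{p}}\A_{p^{v}}$ (Sylow/Hall arguments), identifies this intersection as $\var{S_{p}\Wr B(q)}$ or $\var{S_{p}\Wr B(p)}$, and then applies the already-established finite case and the $p$-group case (via $K_{p}$-series or Ol'shanskii's decomposition theorem) to see the intersection is strictly smaller than the product. Your ``depth-$c$ law'' and ``$d$-th-power commutator law'' would have to be written down and checked; as stated they are hopes, not arguments.

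\medskip
\textbf{A smaller issue in (a).}
When $B$ is torsion of unbounded exponent you appeal to ``cyclic subgroups of unbounded order'', but $C_{k}$ need not embed in $B$ for every $k$, so you cannot simply invoke $A\Wr C_{k}\le A\Wr B$. The paper instead shows directly that such a $B$ discriminates $\A$ and then applies the standard result that $A\Wr B$ generates $\var A\var B$ whenever $B$ discriminates $\var B$.
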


If the abelian group $B$ is of finite exponent, then by Pr\"ufer's theorem~\cite{Robinson} it is a direct product of some finite cyclic subgroups. So the point (b) above just states that in that direct product the cycles of order $d$ and of order $n/d$ are present ``sufficiently many'' times: $B$ should contain at least $c$ copies of $C_{d}$ and infinitely many copies of $C_{n/d}$. In other words, if for a prime divisor $q$ of $n$ we denote by $q^v$ the highest degree of $q$ dividing $n$, then the group $B$ should contain $c$ copies of $C_{q^v}$, if $q$ is coprime with $m$, and infinitely many copies of $C_{q^v}$, if $q$ divides $m$ (see also Remark \ref{REMARK different roles of p}). 
%

Theorem~\ref{Theorem wr nilpotent abelian} continues our research on classification of cases when \eqref{EQUATION_main} holds for groups $A$ and $B$ of certain classes of groups. In particular, in \cite{AwrB_paper, Metabelien} we gave a full classification for \eqref{EQUATION_main} holding for any {\it abelian} groups $A$ and $B$, and in \cite{wreath products algebra i logika, shmel'kin criterion} we classified the cases when $A$ and $B$ are any {\it finite} groups. 

\vskip2mm

One of the oldest occurrence of the equality \eqref{EQUATION_main} is due to G.~Higman who considered \eqref{EQUATION_main} holding for 
$A=C_p$ and $B=C_n$ \cite{Some_remarks_on_varieties}. C.~Haughton covered the case of any cyclic $A,B$ (see ~\cite{HannaNeumann} and \cite{Burns65}).
In $\S$5 of \cite{3N} B.H.~Neumann,  H.~Neumann, P.M.~Neumann ask: 
\textit{``If the groups $A$, $B$ belong to the varieties $\U$, $\V$, respectively, then $A \Wr B$
(and hence also $A \Wrr B$) belongs to the product variety $\U \V$. If $A$ generates
$\U$ and $B$ generates $\V$, then one might hope that $A \Wr B$ generates $\U \V$; but
this is in general not the case''}.
Then they bring examples where \eqref{EQUATION_main} does or does not hold for some specific groups such as $p$-groups, free groups,  discriminating groups, infinite direct powers, etc. For these and some other earliest results see Hanna Neumann's monograph~\cite{HannaNeumann} and \cite{3N,B3, Burns65, B+3N,   Some_remarks_on_varieties}. Our criterion generalizes these and also some other  known results in literature in full or in part.

And in general, motivation of such study is explained by importance of wreath product as a key tool to study the product varieties of groups. 
For, the product $\U \V$ consists of extensions of all groups $A \in \U$ by all groups $B \in \V$, and if the equality \eqref{EQUATION_main} holds for some fixed groups $A$ and $B$ generating the varieties $\U$ and $\V$ respectively, then we can restrict ourselves to consideration of $\var{A \Wr B}$, which is easier to study rather than to explore all the extensions inside $\U \V$.
There are very many examples, when this approach is used (we listed some of them in \cite{Metabelien}).

\vskip2mm
Technique of the arguments below is based both on traditional theory of varieties of groups and on some newer approaches. In Section~\ref{k_p series} we suggest the idea of application of $K_p$-series and of D.~Shield's formula \cite{Shield nilpotent a, Shield nilpotent b} as tools to describe some non-nilpotent varieties via their finitely-generated nilpotent groups (see more details in  \cite{K_p-series}). Also, our usage of Fitting and Frattini subgroups is based not only on their well known properties reflected, for example, in~\cite{HannaNeumann, Robinson} but also on initial articles of W.~Gasch\"utz and H.~Fitting \cite{Gaschuetz Frattini, Gaschuetz aufloessbar, Fitting Gruppen endlicher Ordnung}: in their original work some results are formulated in sharper forms for certain specific cases, which are relevant in our study (compare constructions with critical groups and the application of Theorem of Clifford in Section~\ref{SECTION critical groups} below with constructions in \cite[Section 2 in Chapter 5]{HannaNeumann} or with \cite{Burns65}).

Below we without any definitions use the basic notions on varieties of groups, relatively free groups, verbal subgroups, discriminating groups, wreath products, etc. The background information can be found in \cite{HannaNeumann, Robinson}. 

\vskip2mm
{\bf Acknowledgments.} 
In initial steps of this work the case of wreath products of $p$-groups, which currently is covered by Section~\ref{k_p series}, was handled using the $K_p$-series. 
Later we discussed the topic with Prof. A.Yu.~Ol'shanskii, who  suggested a shorter proof for Lemma \ref{LEMMA condition p groups} by the arguments of \cite{Olshanskii Neumanns-Shmel'kin}. In Section \ref{k_p series} we present both proofs, and the most part of considerations with $K_p$-series is not included into the current text, we just provide references to \cite{K_p-series} where closer discussion on $K_p$-series is presented. 


Partial results of our work were presented as a main plenary talk to  the {\it ``Mal'tsev Meeting''} international conference, Novosibirsk, Russia, November 10--13, 2014.
The author was supported in part by joint grant 15RF-054 of RFBR and SCS MES RA, and by 15T-1A258 grant of SCS MES RA.

\section{A characterization of the critical groups in $\var{A}\var{B}$}
\label{SECTION critical groups}

This section prepares the tools required to prove the sufficiency part of Theorem~\ref{Theorem wr nilpotent abelian}. We without any definitions use the notions of Frattini subgroup $\Phi (G)$ and the Fitting subgroup $F(G)$ of the given group $G$. Their definitions and basic properties can be found in W.~Gasch\"utz's important article  \cite{Gaschuetz Frattini}. 
For constructions of semisimple groups see H.~Fitting's work \cite{Fitting Gruppen endlicher Ordnung}. Concise summary of these concepts also is presented in \cite{Robinson}.
For definitions and basic properties of critical groups we refer to \cite{HannaNeumann, Burns65}.
 
Assume $A$ is a nilpotent group of class $c$ and of exponent $m \not= 1$, and $B$ is an abelian group of exponent $n \not= 1$. 
Take any critical group $K$ of the variety $\var{A}\var{B}$. It is an extension of a group $N \in \var{A}$ by some group $S \in \var{B}$, where $S$ as a finite abelian group has a direct decomposition $S = U \times U^*$, with $U$ being a subgroup generated by all the elements of exponents coprime to $m$. Our main technical goal is to establish some {\it bounds of the number of direct cyclic summands} of $U$ in terms of the nilpotency class $c$. Below we  without loss of generality may assume that both $S$ and $N$ are non-trivial, for, (a) if $S=\1$, then $K \in \var{A}$ and $U= \1$; \,  (b)  if $N=\1$, then $K \in \var{B}$ and according to \cite[51.36]{HannaNeumann} $K$ is a cyclic group of prime power order (as any critical abelian group), and so $U$ either is a non-trivial cyclic group or is trivial, so the bounds are evident.

\vskip2mm
We will twice use the result of W.~Gasch\"utz proved in \cite{Gaschuetz Frattini} as Satz 7: in any finite group $G$ any abelian normal subgroup $H$, intersecting with $\Phi (G)$ trivially, causes a factorization $G = H \tilde G$, where $\tilde G \not= G$, $H \cap \tilde G = \1$ and $H$ is a direct product of factors which are some minimal normal subgroups in $G$.
\vskip2mm

Denote $\Phi = \Phi (K)$ and $F = F (K)$ and notice that the Fitting subgroup $F$ is a $p$-group for some prime $p$ because (as a finite nilpotent group) $F$ is a direct product of its Sylow subgroups for distinct primes. These Sylow subgroups intersect trivially, and each of them is characteristic in $F$, and thus is normal in $K$. If $F$ contained more then one such Sylow subgroups, $K$ would be isomorphic to the subdirect product of its factor groups by those Sylow subgroups. Since $K$ is a critical group,  that option is ruled out.

Since $N$ is a normal nilpotent subgroup in $K$, it is contained in $F$. For the same reason $F$ also contains the normal (in fact also characteristic) subgroup $\Phi$. 
By \cite[Satz 2]{Gaschuetz Frattini} the factor group 
$$
\Phi(K/\Phi) = \Phi(K) / \Phi = \Phi / \Phi \cong \1
$$ 
is trivial (in terminology of W.~Gasch\"utz $K/\Phi$ is a {\it $\Phi$-free group}).
Our first application of Satz 7 is for the $\Phi$-free group $G = K/\Phi$ and $H = F/\Phi$. To apply it we yet have to show that $F/\Phi$ is abelian. By \cite[Satz 10]{Gaschuetz Frattini} the Fitting subgroup $F(K/\Phi)$ of $K/\Phi$ is equal to $F(K)/\Phi = F/\Phi$. Since $F/\Phi$ is normal in $K/\Phi$, by \cite[Satz 10]{Gaschuetz Frattini} the Frattini subgroup of $F/\Phi$ is in the Frattini subgroup of $K / \Phi$. So $F/\Phi$ also is a $\Phi$-free group. 
But a nilpotent finite group is $\Phi$-free only if it is a direct product of cycles of prime order (see \cite{Burnside powers of primes} or \cite[Abschnitt 3]{Gaschuetz Frattini}).
Thus by Satz 7 the factor-group $F/\Phi$ is a direct product of copies of a cycle $C_p$ for some prime $p$, and the factors of this direct product can be grouped so that each group is a minimal normal subgroup in the whole $K/\Phi$.

\vskip2mm
To approach to the second application of Satz 7 denote by $I$ the intersection $I = N \cap \Phi$, and notice that, since the subgroup $I$ is normal in $K$, we can apply Satz 2 from \cite{Gaschuetz Frattini} to get the Frattini subgroup of the factor-group $K/I$:
$$
\Phi(K/I)= \Phi(K)/I = \Phi/I.
$$
Clearly, the factor $N/I$ has a trivial intersection with $\Phi/I$. Also, $N/I$ is abelian, since 
$$
N/I = N/(N \cap \Phi)\cong N \Phi / \Phi \le F/ \Phi,
$$
where $F/ \Phi$ is an elementary abelian group. $N/I$ is normal in $K/I$, since $N$ is normal in $K$.
These preparations allows us to apply Satz 7 for $G = K/I$ and $H = N/I$. We get that $N/I$ is a direct product
\begin{equation}
\label{EQUATION_factorization of N/I}    
N/I = V_1/I\times \cdots \times V_s/I,
\end{equation}
where each $V_i/I$ is a minimal normal subgroup in $K/I$, and $V_i/I$ also is a direct product of some number of copies of the finite cycle $C_p$ (since $N/I$ is isomorphic to a subgroup of the elementary abelian group $F/\Phi$). Denote by $\tilde N /I$ the compliment of $N/I$ in $K/I$ (here $\tilde N$ is taken to be the full pre-image of that compliment under natural homomorphism with kernel $I$). We have: 
\begin{equation}
\label{EQUATION zerfaellen von K/I}
\text{$\tilde N /I \not= K/I$, \quad $N/I \, \cdot \, \tilde N /I = K/I$ \quad  and  \quad $N/I \cap \tilde N /I \cong \1$.}
\end{equation}
For now let us leave aside the case when $N/I$ is trivial, that is, when $N \le \Phi$ (we will consider this case a little later).

\vskip2mm
The constructions above allow us to apply a theorem of S.~Oates and M.B.~Powell, from \cite{Oates Powell} mentioned by Hanna Neumann in \cite{HannaNeumann} as Theorem 51.37 and stressed as a {\it ``much deeper result''}. Namely, if we  additionally assume $s > c$, then we will have:
\begin{enumerate}

\item[i)] $K = \langle \tilde N, V_1,\ldots, V_s  \rangle$ because $K$ is generated by $\tilde N \cup I \cup N$, and where $I$ can be omitted as it lies in the Frattini subgroup (the set of non-generators of $K$);

\item[ii)]  no proper subset of the set $\{V_1,\ldots, V_s\}$ suffices, together with $\tilde N$, to generate $K$ because each $V_i / I$ admits the actions by elements of $\tilde N$;

\item[iii)]  every mutual commutator group $[V_{\pi(1)},\ldots, V_{\pi(s)}]$, where $\pi$ is some permutation of the integers $1, \ldots , s$, is trivial because all factors $V_i$ are inside the nilpotent subgroup $N$ of class at most $c$. 
\end{enumerate}
Then by Theorem 51.37 the group $K$ is not critical. This contradiction implies that $s \le c$ (still with assumption $N \not\le \Phi$ made above).

Notice that we used Theorem 51.37 just for briefness of the argument, for, in current circumstances it would not be very complicated to directly show $K$ is not a critical group by application of methods with {\it special commutators} to the group $K$ (see Section 3 in Chapter 3 in \cite{HannaNeumann}, in particular, Lemma 33.35, Lemma 33.37 and Lemma 33.43).

\vskip4mm
The group $N/I$ and each factor $V_i/I$ of \eqref{EQUATION_factorization of N/I} can be considered to be finite-dimensio\-nal vector spaces over the field $\F_p$. Actions of elements of $\tilde N /I$ on $N/I$ by conjugations form a linear presentation $\psi$ of this group over the Galois field $\F_p$. In other words $N/I$ can be considered to be an $\F_p \tilde N /I$-module.

Since the direct factor $U$ of the group $S$ is of order coprime to $m$ and to $p$, then by Schur's Lemma $U$ can be identified by its copy in $K$. And since its order 
is coprime to $|I|$, we can identify that copy with its image in the factor group $\tilde N / I$. For simplicity let us use the same notation $U$ for this copy $UI/I$ also. 

Let us show that no non-trivial element $u \in U$ may centralize the space $N/I$, that is, the restriction of  $\psi$ to $U$ is a faithful presentation. Since $K/N \cong S$ is abelian, the factor-group $(K/I)/(N /I)$ also is abelian. By \eqref{EQUATION zerfaellen von K/I} that factor-group  is isomorphic to $\tilde N /I$. Thus if $u$ centralizes $N/I$, then it is easy to check that $\langle u \rangle $ is a normal subgroup in the whole group $K/I = N/I \, \cdot \, \tilde N /I$. But a normal and abelian subgroup $\langle u \rangle $ should lie in the Fitting subgroup $F(K/I)=F/I$, which is impossible since $\langle u \rangle $ contains elements of orders coprime to $p$.

Fix an index $i=1,\ldots, s$, and consider the linear presentation $\psi_i$ of $U$ in the subspace $V_i/I$ (the restriction of $\psi$ in $V_i/I$), that is, the $\F_p U$-module $V_i/I$.
Although $V_i/I$ is irreducible under actions (conjugations) of $\tilde N/I$ (or even of the whole group $K/I$), it may no longer be irreducible under actions of a smaller subgroup $U$. 
Let us show, that $V_i/I$ is a direct sum of isomorphic copies of a certain irreducible $\F_p U$-submodule. 

Since $\tilde N / I$ is abelian, $U$ is normal in it, and by Theorem of Clifford \cite[Theorem 8.1.3]{Robinson} $V_i/I$ can be presented as a direct sum of some irreducible $\F_p U$-submodules. Moreover, if $\{W_{i,1}, \ldots, W_{i,r_i}\}$ is a maximal subsystem of these submodules inducing all the pairwise non-isomorphic irreducible $\F_p U$-submodules of $V_i/I$, then one may group all the $\F_p U$-submodules in $V_i/I$ into blocks (called homogeneous components) $\overline W_{i,1}, \ldots, \overline W_{i,r_i}$ such that each $\overline W_{i,j}$, $j=1, \ldots, r_i$, is a sum of all irreducible $\F_p U$-submodules of $V_i/I$ isomorphic to $W_{i,j}$. By Theorem of Clifford the components $\overline W_{i,j}$ are conjugated modules, moreover, $\tilde N/I$ acts on them as a transitive permutation group. But since $U$ and $\tilde N/I$ are abelian, conjugation is identical operation here, and we have just one component: $r=1$ and
\begin{equation}
\label{EQUATION durect sum  V_i/I}
 V_i/I = \overline W_{i,1} = W_{i,1} \oplus \cdots \oplus W_{i,1}.
\end{equation}

Denote by $\rho_i$ the restriction of the presentation $\psi_i$ of $U$ in the subspace $W_{i,1}$. By our construction $\rho_i$ is irreducible. Denoting its kernel by $U_i$ we get that $U/U_i$ has an irreducible and faithful presentation in $W_{i,1}$. But an abelian group may have such a presentation only if it is cyclic. 
By Theorem of Remak:
\begin{equation}
\label{EQUATION decomposition of U/ V}
U/ V \cong U/U_1 \times \cdots \times U/U_s,
\end{equation}
where $V =\bigcap\{U_i \, | \, i=1,\ldots, s \}$. If an element $v$ is in $V$, it centralizes each of $W_{i,1}$. By decomposition \eqref{EQUATION durect sum  V_i/I} it also centralizes each of the direct summands $W_{i,1}$ in $\overline W_{i,1}$. Thus $v$ centralizes the sum of all $ V_i/I = \overline W_{i,1}$, $i=1,\ldots, r$, which is the whole space $N/I$. But, as we mentioned above, none of the non-trivial elements of $U$ may centralize $N/I$, and so $V \cong \1$. It follows from this and from \eqref{EQUATION decomposition of U/ V} that $U$ is a direct product of $s$ cyclic groups $U/U_1 , \ldots ,  U/U_s$. We earlier proved that $s \le c$, that is, $U$ is a direct product of at most $c$ cyclic groups, provided that the assumption $N \not\le \Phi$ holds. 

So it remains to cover the case when $N \le \Phi$. Since $K/N$ is abelian, $K/\Phi$ also is abelian. Then $K$ is nilpotent by \cite[Satz 10]{Gaschuetz Frattini}. As such, $K$ is a direct product of its Sylow subgroups by distinct primes. Since $K$ is a critical group, it consist of just one Sylow $p$-subgroup for the $p$ obtained from $N$ earlier. Thus $U \cong \1$, and the number of non-trivial direct cyclic factors is zero.

Assembling the bounds obtained in this section we get:

\begin{lem}
\label{Lemma Bound for critical}
Let $A$ be a nilpotent group of class $c$ and of exponent $m \not= 1$, and $B$ be an abelian group of exponent $n \not= 1$. 
Then any critical group $K$ of the variety $\var{A}\var{B}$ is an extension of a group from the variety  $\var{A}$ by some direct product $U \times U^*$ from the variety  $\var{B}$, such that
\begin{enumerate}
  \item the orders of non-trivial elements from $U$ are coprime to $m$;
  \item $U$ can be presented as a direct product of at most $c$ cyclic groups;
  \item the orders of non-trivial elements from $U^*$ can only be divided by primes, which are common divisors of $m$ and $n$.
\end{enumerate}
\end{lem}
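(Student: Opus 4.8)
The statement is essentially a \emph{summary} of the constructions carried out in this section, so the plan is to collect the three bounds and indicate where each comes from, filling the small gaps that the narrative above leaves implicit.

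Points (1) and (3) I would read off from the primary decomposition of the finite abelian group $S\in\var B$. Writing $S=\prod_q S_q$ as the direct product of its $q$-primary components over the primes $q\mid n$, set $U=\prod_{q\nmid m}S_q$ and $U^*=\prod_{q\mid m}S_q$. Then every non-trivial element of $U$ has order coprime to $m$, which is (1); and a prime $q$ divides the order of a non-trivial element of $U^*$ only when $S_q\neq\1$ (forcing $q\mid n$) and $q\mid m$, which is (3). The two degenerate possibilities $S=\1$ and $N=\1$ were disposed of at the very beginning of the section ($U=\1$ in the first case, $U$ cyclic or trivial in the second, by \cite[51.36]{HannaNeumann}).

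The substance of the lemma is point (2), and here I would run the chain of reductions developed above: (a)~since $K$ is critical its Fitting subgroup $F$ is a $p$-group for a single prime $p$, and it contains both $N$ and $\Phi=\Phi(K)$; (b)~applying Gasch\"utz's Satz 2, 7 and 10 from \cite{Gaschuetz Frattini} modulo $\Phi$ shows $F/\Phi$, hence its subgroup $N\Phi/\Phi\cong N/I$ with $I=N\cap\Phi$, is elementary abelian; (c)~a second use of Satz 7, now modulo $I$, produces the decomposition \eqref{EQUATION_factorization of N/I} of $N/I$ into $s$ minimal normal subgroups of $K/I$ together with a complement $\tilde N/I$; (d)~if $s>c$ the data (i)--(iii) contradict the criticality of $K$ via the Oates--Powell theorem \cite[51.37]{HannaNeumann}, whence $s\le c$; (e)~the leftover case $N\le\Phi$ forces $K/\Phi$ abelian, so $K$ is nilpotent by \cite[Satz 10]{Gaschuetz Frattini}, hence a $p$-group, hence $U=\1$.

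What remains is to transfer the bound from $N/I$ to $U$ itself. Identify $U$ (up to the coprimality isomorphism) with a subgroup of $\tilde N/I$; its conjugation action on $N/I$ is faithful, since a centralizing $u$ would generate a normal abelian subgroup of $K/I$ lying outside the Fitting subgroup $F/I$. Because $U\normal\tilde N/I$ with both groups abelian, Clifford's theorem \cite[8.1.3]{Robinson} makes each $\F_p U$-module $V_i/I$ a direct sum of copies of a single irreducible submodule (the homogeneous components collapse to one, the abelian group $\tilde N/I$ permuting them trivially), and the kernel $U_i$ of the $U$-action on that submodule has \emph{cyclic} quotient $U/U_i$. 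Remak's theorem then gives $U/V\cong\prod_{i=1}^s U/U_i$ with $V=\bigcap_i U_i=\1$ by faithfulness, so $U$ is a direct product of $s\le c$ cyclic groups, i.e.\ (2). \emph{The main obstacle} is exactly this final transfer: the deep input is Oates--Powell for $s\le c$, but making it bite requires the faithful-action observation together with the Clifford/Remak bookkeeping that guarantees each of the $s$ minimal pieces contributes precisely one cyclic factor to $U$ and that those factors intersect trivially.
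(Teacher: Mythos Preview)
Your proposal is correct and follows essentially the same route as the paper's proof: the primary decomposition of $S$ for points (1) and (3), the Gasch\"utz--Fitting analysis yielding the decomposition \eqref{EQUATION_factorization of N/I} with $s\le c$ via Oates--Powell, and the Clifford/Remak transfer from $N/I$ to $U$ through the faithful conjugation action. The only cosmetic difference is the order of exposition---the paper interleaves the $N\le\Phi$ case at the end rather than alongside step (d)---but the mathematical content is identical.
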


\section{The $K_p$-series and the case with wreath products of $p$-groups}
\label{k_p series}

In this section we consider the equality \eqref{EQUATION_main} for the case when $A$ is a nilpotent $p$-group of exponent $p^u$, and $B$ is an abelian group of exponent $p^v$, with $u,v >0$. This case is covered by Lemma~\ref{LEMMA condition p groups} below, and as we mentioned in the Introduction, we present two independent proofs for it. 

The notations and technique with $K_p$-series in this section are just for the first proof only, and they are not needed for understanding the rest of this paper. So we recommend not to focus on them and skip to Lemma~\ref{LEMMA condition p groups} unless the reader is interested to see our application of Shield's formula to varieties of groups (see details in \cite{K_p-series}).

G.~Baumslag showed that a wreath product of non-trivial groups $A$ and $B$ is nilpotent if and only if $A$ is a nilpotent $p$-group of finite exponent $p$, and $B$ is a finite $p$-groups~\cite{Baumslag nilp wr}. The exact  nilpotency class was computed by D.~Shield in \cite{Shield nilpotent a, Shield nilpotent b} (for background information and a survey see Chapter 4 of J.D.P.~Meldrum's monograph \cite{Meldrum book}). 
The $K_p$-series $K_{i,p}(B)$ of $B$ is defined for $i=1, 2, \ldots$ as
$ 
K_{i,p}(B) = 
\prod 
\{ \gamma_r(B)^{p^j} \mathrel{|}  \text{for all $r, j$ with $r p^{\,j} \ge i$} \}
$,
where $\gamma_r(B)$ is the $r$'th term of the lower central series of $B$. 
Let $d$ be the maximal integer, such that $K_{d,p}(B) \not= \{1\}$. Then for each $s=1,\ldots, d$ define $e(s)$ by
$
p^{e(s)} = |K_{s,p} / K_{s+1,p}|,
$
and also set:
$
a = 1 + (p-1) \sum_{s=1}^d \left(s \cdot e(s)\vphantom{a^b}\right)
$
and
$
b = (p-1)d.
$
By D.~Shield's theorem the nilpotency class of  $A \Wr B$ is the maximum
$   
\max_{h = 1, \ldots, \, c} \{
a \, h + (s(h)-1)b
\},
$
where $s(h)$ is chosen so that $p^{s(h)}$ is the exponent of the $h$'th term $\gamma_h(A)$ of the lower central series of $A$. 

\vskip2mm
Denote by $\beta$ the cardinality of $B$ and by $A^\beta$ the Cartesian product of $\beta$ copies of $A$. For the given fixed positive integer $l$ and for the integer $t \ge l$ denote 
$   
Z(l,t) =C_{p^v}^{l} \times C_{p^{v-1}}^{\, t-l}.
$
As we prove in \cite{K_p-series} there is a positive integer $t_0$, such that for all $t > t_0$ the nilpotency class of the wreath product
$A^\beta \Wr Z(l,t)$  is equal to: 
\begin{equation}
\label{EQUATION bound 1}  
\textstyle  
c + c\,t(p-1)\left( 
{1 - p^{v-1} \over \hskip-4mm 1-p} +l/t \cdot p^{v-1}
\right)
+ (\alpha-1)(p-1)p^{v-1},
\end{equation}
where the exponent of  $\gamma_c(A)$ is $p^\alpha$ ($\alpha \not= 0$, since the class of $A$ is c).

In $A$ we can construct a finite subgroup $\tilde A$ such that the exponents of terms $\gamma_h(\tilde A)$ and $\gamma_h(A)$ are equal for each $h = 1, \ldots, c$ 
 \cite{K_p-series}.
If $\tilde A$ is a $z$-generator group, denote 
$
Y(z,t) =C_{p^v}^{\, t-z}
$.
We proved in \cite{K_p-series} that there is a positive integer $t_1$ such that for all $t > t_1$ the nilpotency class of the wreath product
$\tilde A \Wr Y(z,t)$   is equal to: 
\begin{equation}
\label{EQUATION bound 2}    
\textstyle
c + c(t-z)(p-1) \frac{1 - p^{v}}{\hskip-1mm 1-p}
+ (\alpha-1)(p-1)p^{v-1}.
\end{equation}

\begin{lem}
\label{LEMMA condition p groups}
Let $A$ be a nilpotent $p$-group of exponent $p^u$, and $B$ be an abelian group of exponent $p^v$, with $u,v >0$. Then the wreath product $A \Wr B$ generates the variety $\var{A} \var{B} = \var{A} \A_{p^v}$ if and only if the group $B$ contains a subgroup isomorphic to the infinite direct power $C_{p^v}^\infty$ of the cycle  $C_{p^v}$.
\end{lem}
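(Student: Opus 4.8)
We must show that $A \Wr B$ generates $\var{A}\A_{p^v}$ exactly when $B$ contains a copy of $C_{p^v}^\infty$. The \emph{necessity} direction is the easier half. Assume $B$ does \emph{not} contain $C_{p^v}^\infty$. Since $B$ is abelian of exponent $p^v$, by Pr\"ufer's theorem it is a direct product of cyclic $p$-groups of orders at most $p^v$, with only finitely many factors of full order $p^v$. Then any finitely generated subgroup $B_0 \le B$ is a direct product of boundedly many cycles whose orders are bounded, and more importantly the relevant invariant --- the number of $C_{p^v}$-summands --- is bounded by some fixed $l_0 < \infty$ across all finitely generated subgroups. On the other hand $A$ is a finitely generated-by-bounded-exponent nilpotent $p$-group, so $\var{A}$ is generated (as a variety) by a single finite group, say $\tilde A$, and one checks that $\var{A \Wr B}$ is generated by the groups $A \Wr B_0$ with $B_0$ ranging over finitely generated subgroups of $B$; since each such $A \Wr B_0$ has nilpotency class computed by Shield's formula in terms of the $K_p$-series of $B_0$, and this class is \emph{bounded} by a function of $l_0$ (the growth in \eqref{EQUATION bound 1} is linear in $t$ but the coefficient involves $l/t$, and with $l$ bounded the class of $A^\beta \Wr Z(l,t)$ grows strictly slower than that of $\tilde A \Wr Y(z,t)$ in \eqref{EQUATION bound 2} which is forced in $\var{A}\A_{p^v}$), we conclude $\var{A \Wr B}$ is a proper subvariety of $\var{A}\A_{p^v}$.

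For the \emph{sufficiency} direction, suppose $C_{p^v}^\infty \le B$. Since always $A \Wr B \in \var{A}\var{B} = \var{A}\A_{p^v}$, we only need the reverse inclusion $\var{A}\A_{p^v} \subseteq \var{A \Wr B}$. It suffices to produce, inside $\var{A \Wr B}$, groups that together generate $\var{A}\A_{p^v}$; the natural candidates are the finite groups $\tilde A \Wr Y(z,t)$ and $A^\beta \Wr Z(l,t)$ introduced before the lemma. First I would reduce to showing that every finitely generated group in $\var{A}\A_{p^v}$ --- hence every finitely generated \emph{nilpotent} group in it, since $\var{A}\A_{p^v}$ is a product of a nilpotent variety by an abelian variety and such varieties are generated by their finitely generated nilpotent members --- lies in $\var{A \Wr B}$. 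This is exactly where the $K_p$-series and Shield's formula enter: a finitely generated nilpotent group $G \in \var{A}\A_{p^v}$ has some nilpotency class $c_G$, and I must exhibit a section of a suitable $A \Wr B$ whose class is at least $c_G$ while still lying in $\var{A}\A_{p^v}$. Because $C_{p^v}^\infty \le B$, for every $t$ the group $\tilde A \Wr C_{p^v}^{\,t} \le A \Wr B$ (after identifying $\tilde A$ with a subgroup of $A$ and $C_{p^v}^{\,t}$ with a subgroup of $B$, passing to the restricted wreath product as a subgroup of the Cartesian one), and by \eqref{EQUATION bound 2} its class is unbounded in $t$. The point is that the variety generated by all these sections recaptures all of $\var{A}\A_{p^v}$ --- one shows the verbal subgroup / the laws match up, using that the lower central exponents of $\tilde A$ agree with those of $A$, so no law of $\var{A}$ is spuriously imposed, and that the top factor $C_{p^v}^{\,t}$ being free of rank $t$ in $\A_{p^v}$ forces all of $\A_{p^v}$ in the top.

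\textbf{The main obstacle.} The delicate part is the sufficiency direction's final step: translating ``the sections $\tilde A \Wr C_{p^v}^{\,t}$ have unbounded nilpotency class inside $\var{A}\A_{p^v}$'' into ``they generate all of $\var{A}\A_{p^v}$.'' Unbounded class alone does not suffice in general; one needs that these wreath products realize, in a strong sense, the ``generic'' behaviour of extensions of $\var{A}$-groups by $\A_{p^v}$-groups. The cleanest route is the discrimination argument: show that the family $\{\tilde A \Wr C_{p^v}^{\,t}\}_{t}$ (or a single well-chosen $A \Wr B$) discriminates $\var{A}\A_{p^v}$, i.e.\ for any finite set of laws failing in $\var{A}\A_{p^v}$ there is a $t$ and a substitution in $\tilde A \Wr C_{p^v}^{\,t}$ falsifying all of them simultaneously --- this is the Neumann--style argument for wreath products discriminating a product variety, and it is where Shield's precise class formula is used to guarantee enough ``room.'' This is also precisely the point at which, as the Acknowledgments note, Ol'shanskii's alternative argument via \cite{Olshanskii Neumanns-Shmel'kin} short-circuits the $K_p$-series computation; I would present the discrimination argument as the conceptual core and defer the computational verification of the class formulas \eqref{EQUATION bound 1}--\eqref{EQUATION bound 2} to \cite{K_p-series}.
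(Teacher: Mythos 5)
Your proof has the roles of the two directions inverted. In the paper, \emph{sufficiency} is the trivial direction: once $B$ contains $C_{p^v}^\infty$, one simply observes that $C_{p^v}^\infty$ is a discriminating group (this is a standard fact, cf.\ \cite{B+3N} or Corollary 17.44 in \cite{HannaNeumann}), so $B$ discriminates $\var{B}=\A_{p^v}$, and then applies the equally standard result 22.42 of \cite{HannaNeumann}: if $B$ discriminates its own variety, then $A \Wr B$ generates $\var{A}\var{B}$. No $K_p$-series, Shield's formula, or verbal-subgroup matching is needed. Your whole ``main obstacle'' paragraph --- about showing the sections $\tilde A \Wr C_{p^v}^{\,t}$ discriminate the product variety $\var{A}\A_{p^v}$, and that unbounded nilpotency class alone does not suffice --- is an attempt to re-derive 22.42 in a special case from scratch. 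The gap you flag (``one shows the verbal subgroup / the laws match up'') is precisely the content of the theorem you should be citing; it is not something to be re-established by hand.

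Your \emph{necessity} argument (finitely many $C_{p^v}$-summands, hence every finitely generated subgroup $T \le A\Wr B$ lands in $\var{A^\beta \Wr Z(l,t)}$ with $l$ fixed, and comparison of the classes in \eqref{EQUATION bound 1} and \eqref{EQUATION bound 2} shows $\tilde A\Wr Y(z,t)$ eventually escapes) is essentially the paper's first proof and is sound in outline, modulo inaccuracies: $\tilde A$ in the paper is a finite subgroup of $A$ whose lower-central exponents agree with $A$'s, not chosen to generate $\var{A}$; and the reduction is to $t$-generator subgroups of $A\Wr B$ embedded into $H\Wr B'$ via Kaloujnine--Krasner, not literally to groups ``$A\Wr B_0$''. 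Finally, you misattribute Ol'shanskii's contribution: his argument from \cite{Olshanskii Neumanns-Shmel'kin} gives an alternative proof of \emph{necessity} (using that $A\Wr B$ lies in $\Ni\,\A_{p^{v-1}}$ for a suitable nilpotent $\Ni$ and that nilpotent varieties are indecomposable, whence $\var{B}\subseteq\A_{p^{v-1}}$, a contradiction), not of the discrimination/sufficiency step.
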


\begin{proof}[Proof based on $K_p$-series]
Sufficiency of the condition is easy to deduce from the discriminating property of $C_{p^v}^\infty$ (see \cite{B+3N} or Corollary 17.44 in \cite{HannaNeumann}).

To prove the necessity part, assume the group $B$ does not contain a subgroup isomorphic to $C_{p^v}^\infty$. By Pr\"ufer's theorem~\cite{Robinson} $B$ is a direct product of copies of some cyclic subgroups the orders of which are bounded by $p^v$. There are only finitely many, say $l$, such factors of order $p^v$, and collecting them together we get $B = B_1 \times B_2$, where  $B_1 = C_{p^v}^l$, and where $\exp {B_2}\le p^{v-1}$.

Any $t$-generator group $G \in \varr{A \Wr B}$ by \cite[16.31]{HannaNeumann} is in the variety generated by all the $t$-generator subgroups of $A \Wr B$.
Assume $T$ is one of such subgroups, and denote by $H$ its intersection with the base subgroup $A^B$ of $A \Wr B$. Then $T / H \cong (T A^B) / A^B \le (A \Wr B)/ A^B \cong B$, and so $T$ is an extension of $H$ by an at most $t$-generator subgroup $B'$ of $B= B_1 \times B_2$. By \cite{KaloujnineKrasner} $G$ is embeddable into $H \Wr B'$. 
$B'$ is a direct product of at most $t$ cycles, of which at most $l$ are of order $p^v$ and the rest are of  lower orders. So $B' \le Z(l,t)$ for a suitable $t$. Since $H \le A^\beta$, we get 
$
H \Wr B \in \var{A^\beta \Wr Z(l,t)}.
$
Thus the nilpotency class of $H \Wr B$ (and of $T$) are bounded by formula \eqref{EQUATION bound 1} for all $t > t_0$. 
Let us find a $t$-generator group in $\var{A} \var{B}$, which is of  higher class, at lest for some  $t$. The group  $\tilde A \Wr Y(z,t)$ is $t$-generator. For $t > t_1$ its nilpotency class is given by formula \eqref{EQUATION bound 2}. Notice that \eqref{EQUATION bound 1}  and \eqref{EQUATION bound 2} both consist of three summands of which the first and the third are the same. 
An easy comparison of the second summands (see \cite{K_p-series} for details) shows that the nilpotency class of the $t$-generator group  $\tilde A \Wr Y(z,t)$ from the variety $\var{A} \var{B}$ is higher than the maximum of the nilpotency classes of the $t$-generator groups in $\var{A \Wr B}$ for all  large enough $t$. Thus $\tilde A \Wr Y(z,t) \not\in \var{A \Wr B}$.
\end{proof}

Notice that the above proof {\it explicitly} constructs the group that belongs to the variety $\var{A} \var{B}$ but not to the variety $\var{A \Wr B}$ when  \eqref{EQUATION_main} fails to hold. 

\begin{proof}[Proof based on Ol'shanskii's theorem]
The sufficiency part of the proof for Lemma \ref{LEMMA condition p groups} can again be covered by the same remark about the discriminating property of $C_{p^v}^\infty$.

Again by Pr\"ufer's theorem~\cite{Robinson} the abelian group $B$ is a direct product of copies of some cyclic subgroups of $p$-power orders. If in this direct product the number of factors of order $p^v$ is finite, then the $p^{v-1}$-th power 
$$
C = B^{p^{v-1}}=\langle b^{p^{v-1}} \,|\, b\in B \rangle
$$ 
of $B$ is a finite group.

Thus the group $A \Wr B$ is an extension of a nilpotent $p$-group $A^B$ of finite exponent by means of the group $B$, which in turn is an extension of the finite $p$-group $C$ by the abelian $p$-group $B/C$ of exponent $p^{v-1}$. The full pre-image $\tilde C$ of $C$ under natural homomorphism $A \Wr B \to (A \Wr B)/A^B$ is a nilpotent $p$-group by G.~Baumslag's theorem~\cite{Baumslag nilp wr}.

Therefore $A \Wr B$ belongs to the product of the nilpotent variety $\Ni=\varr{\tilde C}$ and the abelian variety $\A_{p^{v-1}}$.
Assuming now that $\var{A \Wr B} = \var{A}\var{B}$ we have the inclusion 
\begin{equation}
\label{EQUATION inclusion}
\var{A}\var{B} \subset \Ni \, \A_{p^{v-1}}. 
\end{equation}
By A.Yu.~Olshanskii's theorem \cite{Olshanskii Neumanns-Shmel'kin}, if $\U \subseteq \V$ are varieties of groups with decompositions (into indecomposable factors) $\U=\U_1\cdots\U_k$ and $\V=\V_1\cdots\V_l$, then there exists an intermediate variety $\M$ (in the sense  $\U \subseteq \M \subseteq \V$) with two decompositions $\M=\M_1\cdots\M_k=\Ni_1\cdots\Ni_l$ such that $\U_i \subseteq \M_i$ for each $i$ and $\Ni_j\subseteq \V_j$ for each $j$.

By \eqref{EQUATION inclusion} we can apply this theorem for $\U=\U_1\U_2=\var{A}\var{B}$ and $\V=\V_1\V_2=\Ni \, \A_{p^{v-1}}$.
In particular, $\Ni_1 \subseteq \Ni$ and $\Ni_2 \subseteq \A_{p^{v-1}}$. Since nilpotent varieties are indecomposable, then $\M_1 = \Ni_1$ and $\M_2 = \Ni_2$. So $\var{B} \subseteq \M_2 \subseteq \A_{p^{v-1}}$. The later inclusion contradicts the assumption that $\exp{B} = p^v$.
\end{proof}

\section{Sylow and Hall subgroups of groups in $\var{A \Wr B}$}
\label{Sylow and Hall}

This section contains some technical results and routine computations for Sylow and Hall subgroups in groups generated by wreath products. They will be used in the proof of Theorem~\ref{Theorem wr nilpotent abelian} in Section~\ref{The proof}.
Following the conventional notation we denote by  ${\sf Q}\X$, ${\sf S}\X$, ${\sf C}\X$ and  ${\sf D}\X$ the classes of all homomorphic images, subgroups, cartesian products and direct products of finitely many groups of $\X$ respectively. By Birkhoff's Theorem~\cite{HannaNeumann} for any class  of groups $\X$ the variety $\var{\X}$ generated by it can be obtained by these three operations: $\varr{\X}={\sf QSC}\,\X$.
For the given classes of groups $\X$ and $\Y$ denote $\X \Wr \Y = \{ X\Wr Y \,|\, X\in \X, Y\in \Y\}$. 

In two technical lemmas below we group a few statements which either are known facts in the literature, or are proved by us earlier (see Proposition 22.11 and Proposition 22.13 in \cite{HannaNeumann}, Lemma 1.1 and Lemma 1.2 in \cite{AwrB_paper} and also \cite{ShmelkinOnCrossVarieties}). Their proofs can be found in  \cite{AwrB_paper}. Both of these lemmas will be repeatedly used below.

\begin{lem}
\label{X*WrY_belongs_var}
For arbitrary classes $\X$ and $\Y$ of groups and for arbitrary groups $X^*$ and $Y$, where either $X^*\in 
{\sf Q}\X$, or $X^*\in {\sf S}\X$, or $X^*\in {\sf C}\X$, and where $Y\in \Y$, the group $X^* \Wr
Y$  belongs to the variety $\var{\X \Wr \Y}$.
\end{lem}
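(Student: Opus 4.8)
The plan is to treat the three cases for $X^*$ separately, reducing each to a generation argument via Birkhoff's theorem $\varr{\X \Wr \Y} = {\sf QSC}(\X \Wr \Y)$. In every case the target $X^* \Wr Y$ should be produced from groups of the form $X \Wr Y$ with $X \in \X$ by applying one of the operators ${\sf Q}$, ${\sf S}$, ${\sf C}$, possibly after an easy structural observation about how wreath products interact with these operators on the first coordinate. So first I would fix $Y \in \Y$ once and for all and think of everything as happening ``over $Y$''.

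For the case $X^* \in {\sf S}\X$, say $X^* \le X$ with $X \in \X$, the natural move is to observe that the Cartesian wreath product $X^* \Wr Y$ embeds into $X \Wr Y$: the base group $(X^*)^Y$ embeds into $X^Y$ as the $Y$-indexed Cartesian power of the inclusion $X^* \hookrightarrow X$, this embedding is compatible with the shifting action of $Y$, and hence extends to an embedding of the semidirect products. Therefore $X^* \Wr Y \in {\sf S}(\X \Wr \Y) \subseteq \varr{\X \Wr \Y}$. For the case $X^* \in {\sf Q}\X$, say $X^* = X/M$ with $X \in \X$ and $M \normal X$, one checks dually that $M^Y$ is a normal subgroup of $X \Wr Y$ (again because the shifting action of $Y$ permutes the coordinates and so preserves $M^Y$), and that the quotient $(X \Wr Y)/M^Y$ is isomorphic to $(X/M) \Wr Y = X^* \Wr Y$; hence $X^* \Wr Y \in {\sf Q}(\X \Wr \Y)$. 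For the case $X^* \in {\sf C}\X$, say $X^* = \prod_{i \in \Lambda} X_i$ with each $X_i \in \X$, the relevant identity is that $\bigl(\prod_i X_i\bigr) \Wr Y$ embeds into (in fact, for Cartesian wreath products over the same top group, is isomorphic to a subgroup of) $\prod_i (X_i \Wr Y)$: a function $Y \to \prod_i X_i$ together with an element of $Y$ corresponds to a family of functions $Y \to X_i$ with a common element of $Y$, and the diagonal copy of $Y$ acts simultaneously on all coordinates. Thus $X^* \Wr Y \in {\sf SC}(\X \Wr \Y) \subseteq \varr{\X \Wr \Y}$.

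I expect the main technical obstacle to be the bookkeeping in the ${\sf C}$ case: one must be careful that for \emph{Cartesian} wreath products $\bigl(\prod_i X_i\bigr) \Wr Y$ is not literally equal to $\prod_i(X_i \Wr Y)$ but only embeds into it (the top group $Y$ must be ``diagonalized'' across the product), so the argument yields ${\sf SC}$ rather than ${\sf C}$ alone; since $\varr{\X\Wr\Y}$ is closed under all of ${\sf Q}$, ${\sf S}$, ${\sf C}$ this is harmless, but it is the step where one is most tempted to assert an equality that is false. A secondary point requiring a line of care is the distinction between Cartesian and restricted wreath products when forming these base groups; since the paper fixes the convention that all wreath products are Cartesian, I would simply carry that convention through, noting in passing that the same embeddings work verbatim for the restricted version. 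Finally, I would remark that since this is exactly the content of \cite[Proposition 22.11, Proposition 22.13]{HannaNeumann} and \cite[Lemma 1.1, Lemma 1.2]{AwrB_paper}, one may alternatively just cite those; the sketch above is the self-contained route.
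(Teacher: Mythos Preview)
Your proposal is correct. The paper itself does not supply a proof of this lemma; it simply records it as a known fact and refers the reader to \cite[Propositions 22.11 and 22.13]{HannaNeumann} and \cite[Lemmas 1.1 and 1.2]{AwrB_paper} for the argument, exactly the sources you cite at the end. Your three-case sketch (embedding for ${\sf S}$, quotient by $M^Y$ for ${\sf Q}$, diagonal embedding into the Cartesian product for ${\sf C}$) is the standard argument found in those references, so there is nothing to contrast.
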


\begin{lem}
\label{XWrY*_belongs_var}
For arbitrary classes $\X$ and $\Y$ of groups and for arbitrary groups $X$ and $Y^*$, where $X\in\X$ and 
where $Y^*\in {\sf S}\Y$, the group $X \Wr Y^*$  belongs to the variety $\var{\X \Wr \Y}$.
Moreover, if  $\X$ is a class of abelian groups, then for each  $Y^*\in {\sf Q}\Y$ the group $X
\Wrr Y^*$  also belongs to $\var{\X \Wr \Y}$.
\end{lem}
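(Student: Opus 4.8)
The statement to prove is Lemma~\ref{XWrY*_belongs_var}. The plan is to verify each of the three assertions by reducing to the fundamental fact that wreath products respect the generating operations in a one-sided manner.

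\textbf{First assertion ($Y^* \in {\sf S}\Y$).} Suppose $Y^* \le Y$ for some $Y \in \Y$. The key observation is that if $X$ acts on $Y$, then $X \Wr Y^*$ embeds naturally into $X \Wr Y$: concretely, the base group $X^{Y^*}$ sits inside $X^{Y}$ via extending functions by the identity (or by restricting, using a transversal of $Y^*$ in $Y$; the standard embedding here is the one induced by a set of coset representatives), and this embedding is compatible with the $Y^*$-action. Thus $X \Wr Y^* \in {\sf S}(X \Wr Y) \subseteq {\sf S}(\X \Wr \Y) \subseteq \var{\X \Wr \Y}$, using Birkhoff's theorem $\var{\X \Wr \Y} = {\sf QSC}(\X \Wr \Y)$. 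This is the easy part and follows word-for-word the argument for the ${\sf S}$-case in Lemma~\ref{X*WrY_belongs_var}, just applied on the right-hand coordinate.

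\textbf{Second assertion ($\X$ abelian, $Y^* \in {\sf Q}\Y$, restricted product).} Here let $Y^* = Y/M$ for some $M \normal Y$, with $Y \in \Y$. I would construct a surjective homomorphism $X \Wrr Y \to X \Wrr Y^*$. The top group maps by the natural projection $Y \to Y/M$; the problem is defining the map on the base group $X^{(Y)}$ (restricted direct power, i.e.\ the direct sum). For a function $f \in X^{(Y)}$ of finite support, send it to the function $\bar f \in X^{(Y/M)}$ defined by $\bar f(yM) = \prod_{m \in M} f(ym)$ — a finite product since $f$ has finite support and since $X$ is \emph{abelian}, so the product is well-defined independently of the order of the factors. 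One then checks this is a homomorphism of base groups and that it intertwines the $Y$-action (acting through $Y/M$ on the target). Surjectivity is clear. Hence $X \Wrr Y^* \in {\sf Q}(X \Wrr Y)$; since $X \Wrr Y \le X \Wr Y \in \X \Wr \Y$, we get $X \Wrr Y^* \in {\sf QS}(\X \Wr \Y) \subseteq \var{\X \Wr \Y}$. The abelian hypothesis is genuinely needed: without commutativity the ``summing over cosets of $M$'' map is not well-defined, and indeed homomorphic images on the bottom group are not in general inherited by wreath products.

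\textbf{Main obstacle.} The routine but slightly delicate point is the second assertion: verifying that $f \mapsto \bar f$ together with $Y \to Y/M$ really assembles into a group homomorphism $X \Wrr Y \to X \Wrr Y^*$ — one must track the semidirect-product multiplication and confirm that the shift action $({}^y f)(x) = f(y^{-1}x)$ is carried correctly to $({}^{yM}\bar f)(xM) = \bar f(y^{-1}xM)$, which again uses that summing $f$ over a coset $ym M$ of $M$ gives the same answer as $\bar f$ evaluated at the image coset. I expect no conceptual difficulty, only careful bookkeeping; the finiteness of supports (so that all products are finite) and commutativity of $X$ (so that products are unambiguous) are exactly the two ingredients that make it work. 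As noted in the paper, a full proof is deferred to \cite{AwrB_paper}, so in the text it suffices to indicate these reductions and cite the relevant propositions (Proposition 22.11 and Proposition 22.13 in \cite{HannaNeumann}, and Lemma~1.2 in \cite{AwrB_paper}).
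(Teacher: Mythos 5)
Your proof is correct and is essentially the standard argument; the paper itself gives no proof of Lemma~\ref{XWrY*_belongs_var} but defers to \cite{AwrB_paper} and \cite{HannaNeumann}, and what you wrote is precisely the argument found there: extension-by-the-identity embeds $X\Wr Y^*$ into $X\Wr Y$ for $Y^*\le Y$, and the coset-summation map $f\mapsto\bar f$, $\bar f(yM)=\prod_{m\in M}f(ym)$ (well-defined because $X$ is abelian and supports are finite), gives the surjection $X\Wrr Y\to X\Wrr Y/M$. No gaps.
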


\vskip3mm
Let $A$ be a nilpotent group of class $c$ and of exponent $m \not= 1$, and $B$ be an abelian group of exponent $n \not= 1$. Denote by $\U$ the variety generated by $A$, and assume $p_1, \ldots, p_s$ are all the prime divisors of $m$.
Since $\U \subseteq \Ni_{c,m} = \Ni_c \cap \B_m$ is a nilpotent variety, according to  \cite[Corollary 35.12]{HannaNeumann} it is generated by its $c$-generator free group $F = F_c(\U)$. Being a finite nilpotent group, $F$ is a direct product of its Sylow $p_i$-subgroups, $i=1,\ldots,s$:
\begin{equation}
\label{EQUATION_sylow_factors}    
F = S_{p_1} \times \cdots \times S_{p_s}.
\end{equation}
Notice that all the $p_i$'s actually participate: none of the Sylow subgroups is trivial, since $\exp(F) = \exp(\U) = \exp(A)$.

Assume there is a common prime divisor $p$ of $m$ and $n$ (it is one of the primes $p_i$), and let $p^v$ be the highest power of $p$ dividing $n$. 
Further, denote by $B(p)$ the $p$-primary component of the abelian group $B$ (the subgroup of all elements whose order is a power of the prime $p$). 
The following lemma allows to localize the Sylow $p$-subgroups of groups in the variety generated by $A \Wr B$:

\begin{lem}
\label{Lemma intersection}
In the above notations the following equality holds:
\begin{equation}
\label{EQUATION_intersection1}    
\var{A \Wr B} \cap  \var{S_{p}} \A_{p^{v}}    = \var{S_{p} \! \Wr B(p)}.
\end{equation}
\end{lem}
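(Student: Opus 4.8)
The plan is to prove the two inclusions in \eqref{EQUATION_intersection1} separately, the easier being ``$\supseteq$''. Since $S_p \in {\sf S}\U = {\sf S}\var{A}$ (it is a subgroup of the relatively free group $F = F_c(\U)$) and $B(p) \le B$, Lemma~\ref{X*WrY_belongs_var} (applied with $\X = \{A\}$ to get $S_p \Wr B(p) \in \var{A \Wr B(p)}$) together with Lemma~\ref{XWrY*_belongs_var} (since $B(p) \in {\sf S}\{B\}$) gives $S_p \Wr B(p) \in \var{A \Wr B}$; and $S_p \Wr B(p) \in \var{S_p}\A_{p^v}$ because $S_p \Wr B(p)$ is an extension of the $p$-group $S_p^{B(p)} \in \var{S_p}$ by the abelian $p$-group $B(p)$ of exponent dividing $p^v$. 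Hence $\var{S_p \Wr B(p)}$ is contained in the intersection on the left.

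For the reverse inclusion ``$\subseteq$'', first I would reduce to a statement about Sylow $p$-subgroups. By Birkhoff's theorem a typical group in $\var{A \Wr B}$ lies in ${\sf QSC}(A \Wr B)$, and it is enough to generate the intersection variety by the Sylow $p$-subgroups of such groups: if $G$ lies in $\var{S_p}\A_{p^v}$, then $G$ is (close to) a $p$-group — more precisely the relevant verbal information is carried by $G(p)$-type data — so I would argue that $\var{A \Wr B} \cap \var{S_p}\A_{p^v}$ is generated by the Sylow $p$-subgroups of the groups $T \le A \Wr B$, using that the operations ${\sf Q},{\sf S},{\sf C}$ interact well with passing to Sylow/primary components in this nilpotent-by-abelian setting. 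The key computational input, provided by Section~\ref{Sylow and Hall}'s ``routine computations'', should be an explicit description of a Sylow $p$-subgroup $P$ of a subgroup $T \le A\Wr B$: writing $H = T \cap A^B$ and $T/H \le B$, one decomposes $H$ along the $p_i$-primary components of the base group and $B$ along its primary components, and identifies $P$ as (a subgroup of) $S_p^{\,T/H \cap B(p)\text{-part}} \rtimes (T/H)(p)$, i.e.\ something lying in $\var{S_p \Wr B(p)}$ via Lemmas~\ref{X*WrY_belongs_var} and \ref{XWrY*_belongs_var} again, now using that $S_p$ is a homomorphic image / subgroup of powers of $A$ and that $(T/H)(p) \le B(p)$.

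The main obstacle I expect is the bookkeeping in the reduction ``intersect with $\var{S_p}\A_{p^v}$ $\Rightarrow$ only the $p$-Sylow part matters'': one must show that the extra verbal content of $\var{A \Wr B}$ coming from the primes $p_i \ne p$ in $A$ and from the $p$-free part of $B$ gets killed once we intersect with $\var{S_p}\A_{p^v}$, and that what remains is exactly $\var{S_p \Wr B(p)}$ and not something strictly smaller. Concretely, the delicate point is that a Sylow $p$-subgroup of $A \Wr B$ itself is \emph{not} in general $S_p \Wr B(p)$ — the base group $A^B$ contributes $S_p^{\,B}$, a power indexed by all of $B$, not just $B(p)$ — so one has to check that $S_p^{\,B} \rtimes B(p) \in \var{S_p \Wr B(p)}$ (this is where ${\sf C}\X$ in Lemma~\ref{X*WrY_belongs_var} and the behaviour of wreath products under restricting the top group are used) and, conversely, that conjugation by the $p$-free complement of $B(p)$ in $B$ does not enlarge the variety. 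I would handle this by the Kaloujnine--Krasner embedding $T \hookrightarrow H \Wr (T/H)$ exactly as in the proof of Lemma~\ref{LEMMA condition p groups}, then feeding the $p$-parts of $H$ and $T/H$ into Lemmas~\ref{X*WrY_belongs_var}--\ref{XWrY*_belongs_var}; the remaining primes are absorbed because, after intersecting with $\var{S_p}\A_{p^v}$, any identity of $S_p \Wr B(p)$ must already hold, and the $p'$-contributions satisfy those identities trivially for order reasons.
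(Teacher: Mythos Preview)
Your overall architecture matches the paper's: prove ``$\supseteq$'' via Lemmas~\ref{X*WrY_belongs_var}--\ref{XWrY*_belongs_var}, then for ``$\subseteq$'' reduce to $p$-groups, use Kaloujnine--Krasner on a subgroup $T \le A \Wr B$ with $H = T \cap A^B$, and push the $p$-part of $H$ into $\var{S_p}$ and the $p$-part of $T/H$ into $B(p)$. So the strategy is right.

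There are, however, two places where your sketch is genuinely softer than what is needed, and the paper handles them differently. First, you hedge that a group in $\var{S_p}\A_{p^v}$ is ``close to'' a $p$-group; in fact every group there \emph{is} a $p$-group, and this is the entire content of the reduction --- there is no ``$p'$-contribution'' to absorb at the end, so your final paragraph is unnecessary once this is said cleanly. Second, invoking Birkhoff's ${\sf QSC}$ and then speaking of ``Sylow $p$-subgroups of such groups'' is not legitimate as stated: Cartesian powers are infinite and Sylow theory does not apply. The paper instead uses local finiteness of $\var{A \Wr B}$ to reduce to a \emph{finite} $p$-group $P$, and then Higman's lemma that $P \in {\sf QSD}\{R_i\}$ for finitely many finite subgroups $R_i \le A \Wr B$; now Sylow theory gives a Sylow $p$-subgroup $\tilde P \le R_1 \times \cdots \times R_l$ mapping onto $P$, and one works with the projections $\tilde P_i$. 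The same Higman/Sylow manoeuvre is then applied a \emph{second} time to show $M_i = \tilde P_i \cap A^B$ lies in $\var{S_p}$ (via $M_i \in {\sf QSD}\{F\}$ and the decomposition \eqref{EQUATION_sylow_factors}); your proposal gestures at ``decomposing $H$ along the $p_i$-primary components'' but $H$ is an arbitrary subgroup of $A^B$ and need not split that way, so this step also needs the Higman/Sylow route rather than a direct primary decomposition.
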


\begin{proof}
The group $S_{p} \! \Wr B(p)$ evidently is in the product variety $\var{S_{p}} \A_{p^{v}}$. 
Since $S_{p}$ belongs to the variety $\var{F} = \var{A}$ and since $B(p)$ is a subgroup of $B$, then by Lemma~\ref{X*WrY_belongs_var} and Lemma~\ref{XWrY*_belongs_var} the group $S_{p} \! \Wr B(p)$ is in variety $\var{A \Wr B}$.
So the right-hand side of \eqref{EQUATION_intersection1} lies in the left-hand side.

To show the opposite inclusion notice that all groups of $\var{S_{p}} \A_{p^{v}}$ are $p$-groups, and it will be sufficient to take any $p$-group $P$ in $\var{A \Wr B}$, and to show that it is in $\var{S_{p} \Wr B(p)}$. Since $\var{A \Wr B}$ is locally finite, we may assume $P$ is finite. By  \cite[16.31]{HannaNeumann} $\var{A \Wr B}$ is generated by all the finitely generated subgroups $\{ R_i \, | \, i \in I\}$ of the group $A \Wr B$, which, clearly, all are finite also.

By G.~Higman's result \cite[Lemma 4.3, point (ii)]{Some_remarks_on_varieties} $P \in {\sf Q}{\sf S}{\sf D} \{ R_i \, | \, i \in I\}$, that is, there is a {\it finite} sequence of groups $R_1,\ldots, R_l$ (repetition of groups is allowed), such that $P$ is a surjective image of a subgroup $R$ of the direct product $R_{1} \times \cdots \times R_{l}$ of the groups $R_i$ under a homomorphism $\rho: R \to P$.
It is easy to show that $P$ is an image of a suitably chosen Sylow $p$-subgroup $\tilde P$ of $R$. This is true for even more general situation: for any finite group $G$ and for its homomorphic image $H=\varphi(G)$ any Sylow $p$-subgroup $P$ of $H$ is an image of a suitable Sylow $p$-subgroup $\tilde P$ of $G$. Indeed, the image $\varphi(Q)$ of any Sylow $p$-subgroup $Q$ of $G$ is a Sylow $p$-subgroup of $H$, and thus $h^{-1}  \varphi(Q) \, h = \varphi(Q)^h = P$ for some $h\in H$, so we can choose  $\tilde P = Q^h$. For the purposes of the current proof set $\tilde P$ be the respective Sylow $p$-subgroup of $R$ such that $\rho(\tilde P) = P$.

$\tilde P$ is the sub-direct product of its projections $\tilde P_i$ on direct factors $R_{i}$, $i = 1, \ldots , l$. Since each $R_{i}$ is some subgroup of $A \Wr B$, denote by $M_i$ the intersection of $\tilde P_i$ with the base group $A^B$. $M_i$ is normal in $\tilde P_i$, and one can consider the factor group $\tilde P_i / M_i$, which is isomorphic to some subgroup $N_i$ of $B$ because:
\begin{equation}
\label{EQUATION long line}    
\tilde P_i / M_i = \tilde P_i / (\tilde P_i \cap A^B) \cong (\tilde P_i \cdot A^B) / A^B \le A \Wr B / A^B \cong  B.
\end{equation}
By the Kaloujnine-Krassner theorem~\cite{KaloujnineKrasner} $\tilde P_i$ can be embedded into the wreath product $M_i \Wr N_i$. 

Being a $p$-subgroup in $B$, the group $N_i$ is a subgroup also in $B(p)$, and thus by Lemma~\ref{XWrY*_belongs_var} $\tilde P_i \in \var{M_i \Wr N_i} \subseteq \var{M_i \Wr B(p)} $ holds. Further, since $M_i \in \var{F}$, using G.~Higman's result again, we get $M_i \in {\sf Q}{\sf S}{\sf D} \{ F\}$, that is, $M_i$ is a surjective image of a subgroup $L$ of the direct product $F_{1} \times \cdots \times F_{k}$ of finitely many copies of the finite group $F$ under a homomorphism $\mu_i: L \to M_i$. The above used argument about the pre-image of a Sylow $p$-subgroup shows here that the $p$-group $M_i$ also is an image of a Sylow $p$-subgroup $\tilde M_i$ of $L$. The group $\tilde M_i$ is a sub-direct product of its projections $\tilde M_{i,r}$ on direct factors $F_{r}$, $r = 1, \ldots , k$. It follows from \eqref{EQUATION_sylow_factors} that each of $\tilde M_{i,r}$ is embeddable into $S_{p}$. Thus $M_i \in \var{S_{p}}$ and $M_i \Wr B(p) \in \var{S_{p} \Wr B(p)} $ by Lemma~\ref{X*WrY_belongs_var}.
\end{proof}

If $n$ is a divisor of $m$, then Lemma \ref{EQUATION_intersection1}   together with Lemma~\ref{LEMMA condition p groups} already allows to classify all cases, when \eqref{EQUATION_main} holds for a nilpotent group $A$ of finite exponent $m$ and for an abelian group of exponent $n$. 

\vskip3mm
The open case (when $n$ has prime divisors, not dividing $m$) can be covered by the following lemma, the proof of which is sketched, since it is similar to the previous proof.
Let $p$ be a prime divisor of $m$ not dividing $n$, let $q$ be a prime divisor of $n$ not dividing $m$, and assume $q^v$ is the highest power of $q$ dividing $n$. Let $B(q)$ be the $q$-primary component of $B$, and let $F$ and $S_{p_1}, \ldots , S_{p_s}$ denote the same as in Lemma~\ref{Lemma intersection}. 
Then the following lemma localizes the Hall $\{p,q\}$-subgroups in groups of $\var{A \Wr B}$:

\begin{lem}
\label{Lemma intersection2}
In the above notations the following equality holds:
\begin{equation}
\label{EQUATION_intersection2}    
\var{A \Wr B} \cap  \var{S_{p}} \A_{q^{v}}    = \var{S_{p} \! \Wr B(q)}.
\end{equation}
\end{lem}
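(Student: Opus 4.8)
The plan is to mimic the proof of Lemma~\ref{Lemma intersection} almost verbatim, adjusting only the spots where the coprimality of $q$ with $m$ (rather than the fact that $p\mid m$) is used. As before, the inclusion of the right-hand side into the left-hand side is the easy direction: $S_p\Wr B(q)$ lies in $\var{S_p}\A_{q^v}$ since $\exp B(q)=q^v$; and it lies in $\var{A\Wr B}$ because $S_p\in\var{F}=\var{A}$ and $B(q)\le B$, so Lemmas~\ref{X*WrY_belongs_var} and~\ref{XWrY*_belongs_var} apply directly.

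For the reverse inclusion, the key observation is that every group in $\var{S_p}\A_{q^v}$ is a finite (by local finiteness we may assume finite) $\{p,q\}$-group that is an extension of a $p$-group by a $q$-group, i.e.\ has a normal Sylow $p$-subgroup with cyclic-of-$q$-power \ldots more precisely, it is nilpotent-by-abelian of the shape ``$p$-part then $q$-part''. So it suffices to take such a group $P\in\var{A\Wr B}$ and show $P\in\var{S_p\Wr B(q)}$. First I would run exactly the same machinery as in Lemma~\ref{Lemma intersection}: by \cite[16.31]{HannaNeumann} and G.~Higman's \cite[Lemma 4.3(ii)]{Some_remarks_on_varieties}, $P$ is a quotient of a subgroup $R$ of a finite direct product $R_1\times\cdots\times R_l$ of finitely generated (hence finite) subgroups $R_i\le A\Wr B$. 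Now, however, the relevant step is to pass not to a Sylow $p$-subgroup of $R$ but to a Hall $\{p,q\}$-subgroup: one checks, by the same pre-image argument as in the previous proof (the image of a Hall $\pi$-subgroup under a homomorphism of finite \emph{solvable} groups is a Hall $\pi$-subgroup of the image, and all such are conjugate), that $P$ is the image of a Hall $\{p,q\}$-subgroup $\tilde P$ of $R$; here one must note $R$ is solvable, being inside a product of subgroups of the (solvable) group $A\Wr B$, so Hall's theorem applies. Then $\tilde P$ is a sub-direct product of its projections $\tilde P_i\le R_i\le A\Wr B$, and with $M_i=\tilde P_i\cap A^B$ the Kaloujnine--Krassner embedding $\tilde P_i\hookrightarrow M_i\Wr N_i$ gives, via \eqref{EQUATION long line}, that $N_i\cong\tilde P_i/M_i$ is a $\{p,q\}$-subgroup of $B$.

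The point where the argument genuinely diverges is the following: $N_i$ is a $\{p,q\}$-subgroup of the \emph{abelian} group $B$, but $M_i$ is a $p$-group (being a $p$-subgroup of $A^B$, as $A^B\in\B_m$ and the $q$-part of $M_i$ would centralize \ldots no — more simply, $\tilde P$ is a $\{p,q\}$-group and $M_i\le\tilde P_i$ is contained in the nilpotent group $A^B$ of exponent $m$, so its order is a power of the primes dividing $\gcd$ of its order with $m$; since $q\nmid m$, $M_i$ is a $p$-group). Decompose $N_i=N_i(p)\times N_i(q)$ into its $p$- and $q$-primary components. Since $q\nmid\exp M_i$, the verbal-subgroup/splitting considerations (or directly, $\var{M_i\Wr N_i}\subseteq\var{M_i\Wr N_i(p)}\cdot\var{\text{of the }q\text{-part}}$ \ldots ) let us discard $N_i(p)$ in the wreath product at the cost of staying inside $\var{A\Wr B}$: concretely $\tilde P_i$, being a $\{p,q\}$-group, embeds (after replacing $M_i$ by a possibly larger $p$-subgroup of $A^{B}$, namely its full $B$-indexed copy) into $M_i'\Wr N_i(q)$ where $M_i'$ is still a $p$-group in $\var{F}$; here $N_i(q)\le B(q)$. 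Thus $\tilde P_i\in\var{M_i'\Wr B(q)}$ by Lemma~\ref{XWrY*_belongs_var}, and then, exactly as in the proof of Lemma~\ref{Lemma intersection}, applying Higman's result to $M_i'\in\var{F}$ and passing to a Sylow $p$-subgroup of a subgroup of a power of $F$ and using the decomposition \eqref{EQUATION_sylow_factors}, we get $M_i'\in\var{S_p}$, whence $M_i'\Wr B(q)\in\var{S_p\Wr B(q)}$ by Lemma~\ref{X*WrY_belongs_var}, and so $\tilde P_i$ and therefore $P$ lie in $\var{S_p\Wr B(q)}$.

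I expect the main obstacle to be the precise handling of the $q$-part of $N_i$ when the factor $M_i$ on the base side need not be of exponent dividing $n$: one has to be careful that absorbing or discarding $N_i(p)$ does not take us outside $\var{A\Wr B}$, and that the enlargement of $M_i$ to a ``full'' $B$-indexed $p$-subgroup $M_i'$ of $A^B$ is legitimate (it is, since $A^{B}\in\var{A\Wr B}$ and one can enlarge inside the base group while keeping $M_i'$ a $p$-group in $\var{F}$ and keeping the wreath product inside $\var{A\Wr B}$ via Lemma~\ref{X*WrY_belongs_var}). All remaining steps — the Hall-subgroup pre-image lemma for solvable groups, the Kaloujnine--Krassner embedding, and the two applications of Higman's lemma — are routine repetitions of the previous proof, which is why the authors only sketch it.
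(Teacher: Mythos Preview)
Your overall strategy---replacing the Sylow $p$-subgroup argument of Lemma~\ref{Lemma intersection} by a Hall $\{p,q\}$-subgroup argument, justified via solvability and P.~Hall's theorems, and then running Kaloujnine--Krassner and Higman exactly as before---is precisely the paper's approach.

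However, you have overlooked one of the standing hypotheses in ``the above notations'': the prime $p$ is chosen to be a divisor of $m$ \emph{not dividing} $n$. Since $N_i$ is a subgroup of $B$ and $\exp B = n$, the group $N_i$ contains no nontrivial $p$-elements; hence $N_i = N_i(q) \le B(q)$ immediately, with no need to ``discard $N_i(p)$'' or to enlarge $M_i$ to some $M_i'$. The obstacle you anticipate simply does not arise under the actual hypotheses, and your proposed workaround for it (embedding $\tilde P_i$ into $M_i' \Wr N_i(q)$ after somehow absorbing $N_i(p)$ into the base) is left vague and would require genuine justification if the obstacle were real. With the observation $N_i \le B(q)$ in hand, your proof collapses to exactly the paper's sketch: $\tilde P_i \in \var{M_i \Wr B(q)}$ by Lemma~\ref{XWrY*_belongs_var}; then $M_i \in \var{S_p}$ (your argument that $M_i$ is a $p$-group, using $q \nmid m$, is correct and matches the paper's); and Lemma~\ref{X*WrY_belongs_var} finishes.
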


\begin{proof}[Proof sketch]
By arguments similar to the first paragraph of the previous proof the right side of \eqref{EQUATION_intersection2} lies in the left side. 

To show the opposite inclusion denote  $\pi = \{ p, q\}$ and notice that it will be sufficient to take any $\pi$-group $P$ in $\var{A \Wr B}$, and to show it is in $\var{S_{p} \Wr B(q)}$.  
Again, it is enough to consider finite groups $P$ only, and to assume by  \cite[16.31]{HannaNeumann} that $P$ is a surjective image of a subgroup $R$ of the direct product $R_{1} \times \cdots \times R_{l}$ under a homomorphism $\rho: R \to P$ for some finite subgroups $R_i$ of $A \Wr B$.

For the next step we need some modifications, since the Hall $\pi$-subgroups not always satisfy the properties we used above for Sylow $p$-subgroups.
The groups we deal with are at most nilpotent-by-abelian, and thus they are soluble. For the finite soluble groups the theorems of P.~Hall \cite{Robinson} provide the tools we need: for any finite soluble group $G$ and for its homomorphic image $H=\varphi(G)$ any Hall $\pi$-subgroup $P$ of $H$ is an image of a Hall $\pi$-subgroup $\tilde P$. For, the image $\varphi(Q)$ of any Hall $\pi$-subgroup $Q$ of $G$ is a Hall $\pi$-subgroup of $H$, and thus $h^{-1}  \varphi(Q) \,h = \varphi(Q)^h = P$ for some $h\in H$, and we can choose  $\tilde P = Q^h$. 
For the current proof set $\tilde P$ be the respective Hall $\pi$-subgroup of $R$, such that $\rho(\tilde P) = P$.

$\tilde P$ is the sub-direct product of its projections $\tilde P_i$ on direct factors $R_{i}$, $i = 1, \ldots , l$. Like in previous proof denote $M_i = \tilde P_i \cap A^B$. Calculations similar to \eqref{EQUATION long line} show that the factor group $\tilde P_i / M_i$ is isomorphic to some subgroup $N_i$ of $B$. Thus the $\pi$-group $\tilde P_i$ can be embedded into the wreath product $M_i \Wr N_i$. 

Being an image of a $\pi$-group, $N_i$ also is a $\pi$-group. Since $p$ does not divide $n$, the group $N_i$ is inside $B(q)$, and thus by Lemma~\ref{XWrY*_belongs_var} $\tilde P_i \in \var{M_i \Wr N_i} \subseteq \var{M_i \Wr B(q)} $ holds.

Since $M_i \in \var{F}$, using G.~Higman's result for one more time we get that $M_i$ is a surjective image of a subgroup $L \le F_{1} \times \cdots \times F_{k}$ (of copies of $F$) under a homomorphism $\mu: L \to M_i$. The above used argument about the pre-image of a Hall $\pi$-subgroup shows that the $\pi$-group $M_i$ also is an image of a Hall $\pi$-subgroup $\tilde M_i$ of $L$. Since $q$ does not divide $m$, the group $\tilde M_i$ actually is a $p$-group. 
$\tilde M_i$ is a sub-direct product of its projections $\tilde M_{i,r}$ on direct factors $F_{r}$, $r = 1, \ldots , k$, and each of $\tilde M_{i,r}$ is embeddable into $S_{p}$. Thus $M_i \in \var{S_{p}}$ and $M_i \Wr B(q) \in \var{S_{p} \Wr B(q)} $ by Lemma~\ref{X*WrY_belongs_var}.
\end{proof}

Notice that these two lemmas do not cover the case when the exponent of $A$ has a prime divisor $p$, which does not divide the exponent of $B$. In this connection see Remark~\ref{REMARK different roles of p} on ``three roles'' of prime divisors of $m$ and $n$ below.

\section{The proof of Theorem~\ref{Theorem wr nilpotent abelian}}
\label{The proof}

The facts collected in previous sections allow to assemble the proof for our main result:

\begin{proof}[Proof of Theorem~\ref{Theorem wr nilpotent abelian}]
Let us start with statement (a) of Theorem~\ref{Theorem wr nilpotent abelian}, when the group $B$ is not of finite exponent. If $B$ contains an element $z$ of infinite exponent, then it contains the infinite cyclic group $\langle z \rangle = C \cong \Z$, which is a discriminating group \cite[17.6]{HannaNeumann}. Since $\var{\langle z \rangle} =\var {B} = \A$, then by  \cite[17.5]{HannaNeumann} the group $B$ also discriminates $\A$. It remains to apply \cite[22.42]{HannaNeumann} to get that $A \Wr B$ generates $\var{A}\var{B}$.

Now assume all the elements of $B$ are of finite non-zero exponents, which in this case have no common upper bound. $B$ generates $\A$, and it will be enough to show that $B$ again is a discriminating group.
Since $C$ discriminates $\A$, any finite system of non-identities $w_1, \ldots, w_d$ of $\A$ can simultaneously be falsified on some  set $z_1, \ldots , z_s \in C$. Take an element $z \in B$ with exponent larger than the values of all words $w_i$ on variables $z_1, \ldots , z_s$. The non-identities will simultaneously be falsified on some elements in the subgroup $\langle z \rangle \le B$.

\vskip3mm

Turning to the proof of necessity for the condition at statement (b) of Theorem~\ref{Theorem wr nilpotent abelian},  notice that it will be enough to consider two cases: either if $B$ fails to contain $C_{d}^c$, or if it fails to contain $C_{n/d}^\infty$.

\vskip1mm
{\it Case 1}. If $B$ does not contain a subgroup isomorphic to $C_{d}^c$, there is at least one prime divisor $q$ of $n$, such that $B$ does not contain $C_{q^{v}}^c$, where $q^{v}$ is the exponent of $B(q)$, that is, the highest power of $q$ dividing $n$. Using the notation of \eqref{EQUATION_sylow_factors}, the exponent $m$ has at least one prime divisor $p = p_i$, such that the nilpotency class of the Sylow $p$-subgroup $S_p = S_{p_i}$ is equal to $c$ (otherwise the class of $A$ would be lower than $c$).

We have $\var{A \Wr B} \cap \, \var{S_{p}} \A_{q^{v}}   =  \var{S_{p} \! \Wr B(q)}$ by Lemma~\ref{Lemma intersection2}. 
By Lemma~\ref{Lemma Bound for critical} any critical group of $\var{S_{p}} \A_{q^{v}}$ is an extension of some group from $\var{S_{p}}$ by a direct product of at most $c$ cyclic groups from $\A_{q^{v}}$. 
Recalling the steps of the proof of Lemma~\ref{Lemma intersection2} we see that this direct product can be taken to be a subgroup of $B(q)$. 
Thus in \eqref{EQUATION_intersection2} we can replace $B(q)$ by some \textit{finite} (at most $c$-generator) subgroup $B^*$ of $B(q)$. 

According to Theorem 1 in \cite{shmel'kin criterion} (which we also bring in Section~\ref{SECTION comparision} below as Theorem~\ref{Theorem wr finite}) the variety $\var{S_{p} \! \Wr B(q)}=\var{S_{p} \! \Wr B^*}$ is strictly less than the product $\var{S_{p}} \var{B(q)} = \var{S_{p}} \A_{q^{v}}$ because $B$, $ B(q)$ and $B^*$ do not possess a subgroup, isomorphic to $C_{q^v}^c$, where $c$ is the nilpotency class of $S_{p}$ (that is, the class of $A$). Therefore the equality \eqref{EQUATION_main} does not hold, since the product $\var{A} \var{B}$ does contain $\var{S_{p}} \var{B(q)}$ as a subvariety.

\vskip1mm
{\it Case 2}. If $B$ does not contain a subgroup isomorphic to $C_{n/d}^\infty$, there is at least one {\it common} prime divisor $p$ of $m$ and of $n$, such that $B$ does not posses $C_{p^{v}}^\infty$, where $p^{v}$ is the exponent of $B(p)$ and the highest power of $p$ dividing $n$. 
In this case the nilpotency class of the Sylow $p$-subgroup $S_p = S_{p_i}$ may be less than $c$, but $S_p$ certainly is non-trivial, otherwise the exponent $m$ would not be divisible by $p$.

This time we use the equality $\var{A \Wr B} \cap  \var{S_{p}} \A_{p^{v}}    = \var{S_{p} \! \Wr B(p)}$ of  Lemma~\ref{Lemma intersection}. According to Lemma~\ref{LEMMA condition p groups} in Section~\ref{k_p series} the variety $\var{S_{p} \! \Wr B(p)}$ is strictly less than the product $\var{S_{p}} \var{B(p)} = \var{S_{p}} \A_{p^{v}}$ because $B$ and $ B(p)$ do not possess a subgroup isomorphic to $C_{p}^\infty$. Therefore like in Case 1 above, the equa\-lity \eqref{EQUATION_main} does not hold, since $\var{A} \var{B}$ does contain $\var{S_{p}} \var{B(p)}$.

\vskip3mm

It remains to prove sufficiency of the condition at statement (b) in Theorem~\ref{Theorem wr nilpotent abelian}. Assume $B$ contains a subgroup isomorphic to $C_{d}^c \times C_{n/d}^\infty$, and take an arbitrary critical group $K$ in $\var{A}\var{B}$. By Lemma~\ref{Lemma Bound for critical} $K$ is an extension of some group $N \in \var{A}$ by some group $S \in \var{B}$, and $S = U \times U^*$, where $U$ and $U^*$ meet the points (1)--(3) of Lemma~\ref{Lemma Bound for critical}. In particular, by points (1) and (2) $U$ is a subgroup of $C_{d}^c$, and by point (3) $U^*$ is a subgroup of $C_{n/d}^\infty$. So we have
$$
	S = U \times U^* \le C_{d}^c \times C_{n/d}^\infty \le B,
$$
and therefore by Lemma~\ref{X*WrY_belongs_var} and Lemma~\ref{XWrY*_belongs_var} the group $K$ belongs to the variety $\var{A \Wr B}$.

The product $\var{A}\var{B}$ is a locally finite variety, and it is generated by its critical groups \cite[51.41]{HannaNeumann}. Since they all belong to the variety $\var{A \Wr B}$, the equality \eqref{EQUATION_main} does hold for the considered case.

The proof of Theorem~\ref{Theorem wr nilpotent abelian} is completed.
\end{proof}

\begin{rem}
\label{REMARK different roles of p}
Notice an interesting pattern about ``three roles'' of the prime divisors of the exponents $m$ and $n$ according to Theorem~\ref{Theorem wr nilpotent abelian}: 

a) The most important role for feasibility of equality \eqref{EQUATION_main} belongs to the primes $p$ dividing both $m$ and $n$: for each of them the group $B$ should contain the infinite direct power $C_{p^v}^\infty$ of the cycle $C_{p^v}$, where $p^v$ is the highest power of $p$ dividing $n$.

b) Less demanding are the primes $q$ that divide $n$ but not $m$: for each of them the group $B$ should contain the direct power $C_{q^v}^c$, with $c$ being the class of $A$, and $q^v$ being the highest power of $q$ dividing $n$.

c) And, finally, the primes $p$ that divide $m$ but not $n$ have no impact on feasibility of \eqref{EQUATION_main} (they just participate in determination of the exponent of $\var{A}$).
\end{rem}

\section{Some comparison with earlier results}
\label{SECTION comparision}

Let us restate two main theorems of earlier research  \cite{AwrB_paper}--\cite{shmel'kin criterion} and show how Theorem~\ref{Theorem wr nilpotent abelian} generalizes them for wide classes of groups. Firstly, restrict our consideration to {\it finite} groups (recall that we denoted by $C_n$ the cycle of order $n$):

\begin{thm}[Theorem 1 in \cite{shmel'kin criterion}]
\label{Theorem wr finite}
For finite non-trivial groups $A$ and $B$ the equality~\eqref{EQUATION_main} holds if and only if: 
\begin{enumerate}
  \item[a)]  \vskip-1mm the exponents of group $A$ and $B$ are coprime;
  \item[b)] $A$ is a nilpotent group, $B$ is an abelian group;  
  \item[c)] $B$ contains a subgroup isomorphic to the direct power $C_n^c$, where $c$ is the nilpotency class of $A$, and $n$ is the exponent of $B$.
\end{enumerate}
\end{thm}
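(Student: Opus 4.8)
The plan is to obtain Theorem~\ref{Theorem wr finite} as the finite-group specialization of Theorem~\ref{Theorem wr nilpotent abelian}, isolating the one extra ingredient that the general statement does not already supply. The key observation is that for a non-trivial \emph{finite} group $B$ of exponent $n$ the infinite direct power $C_{n/d}^\infty$ embeds into $B$ only when $n/d=1$; and $n/d=1$ means precisely that the largest divisor of $n$ coprime with $m=\exp A$ equals $n$ itself, i.e. $\gcd(m,n)=1$, in which case $C_d^c\times C_{n/d}^\infty=C_n^c$. Hence, \emph{granted} that $A$ is nilpotent and $B$ is abelian, Theorem~\ref{Theorem wr nilpotent abelian}(b) reads: the equality \eqref{EQUATION_main} holds for $A$ and $B$ if and only if $\gcd(\exp A,\exp B)=1$ and $B$ contains a copy of $C_n^c$ --- that is, exactly conditions (a) and (c). So the implication ``(a), (b) and (c) together imply \eqref{EQUATION_main}'' is immediate from Theorem~\ref{Theorem wr nilpotent abelian}, and for the converse it remains only to show that \eqref{EQUATION_main} forces condition (b).

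For the necessity of (b): since $B$ is finite, $A\Wr B$ is a \emph{finite} group, so by the Oates--Powell theorem \cite{Oates Powell} (see \cite[Chapter 5]{HannaNeumann}) the variety $\var{A\Wr B}$ is Cross; assuming \eqref{EQUATION_main}, so is $\var{A}\var{B}$. By the known description of Cross product varieties in \cite{HannaNeumann} --- a product $\U\V$ of two non-trivial varieties is Cross only if $\U$ is nilpotent and the exponents of $\U$ and $\V$ are coprime --- the variety $\var{A}$ is nilpotent, so the finite group $A$ is nilpotent, and $\exp A=\exp\var{A}$ is coprime to $\exp B=\exp\var{B}$; this already delivers condition (a) together with the ``$A$ nilpotent'' half of (b). It remains only to force $B$ to be abelian.

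Showing that $B$ must be abelian is, I expect, the main obstacle, and it is the point at which the \emph{wreath} structure --- not merely Cross-ness of $\var{A}\var{B}$ --- has to be exploited: the group $C_p\Wr S_3$ (with $p$ coprime to $6$) already illustrates that a product of a nilpotent variety with a non-abelian one may well be Cross and yet fail to be generated by the obvious wreath product. The approach I would take is to assume $A$ nilpotent, $\gcd(\exp A,\exp B)=1$ and $B$ non-abelian, and to exhibit a group lying in $\var{A}\var{B}$ but not in $\var{A\Wr B}$. Fixing a prime $p\mid m$ gives $C_p\le A$, hence $\A_p\subseteq\var{A}$ and $\var{A}\var{B}\supseteq\A_p\var{B}$; on the other side one pushes the localization machinery already used in the proofs of Lemmas~\ref{Lemma intersection} and~\ref{Lemma intersection2} --- every finite subgroup of $A\Wr B$, and every subdirect factor produced by \cite[16.31]{HannaNeumann}, embeds via Kaloujnine--Krassner into some $M\Wr N$ with $M$ nilpotent and $N$ a subgroup of $B$, and these split by Schur--Zassenhaus because of coprimality --- to constrain the ``top'' sections of the critical groups of $\var{A\Wr B}$, and then one builds a critical group of $\var{A}\var{B}$ whose top section is a non-abelian $\var{B}$-group that cannot occur there (for instance a suitable section of $B\times B$). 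Turning ``non-abelian top section'' into a genuine obstruction is the delicate step; it calls for exactly the kind of bookkeeping carried out in Section~\ref{SECTION critical groups} with the Oates--Powell ``much deeper result'' \cite[51.37]{HannaNeumann} and the special-commutator lemmas of \cite[Chapter~3]{HannaNeumann}, and it parallels, in a markedly simpler setting, the arguments of \cite{wreath products algebra i logika, shmel'kin criterion}. Once $B$ is known to be abelian, the reduction of the first paragraph closes the proof.
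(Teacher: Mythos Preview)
Your overall strategy coincides with the paper's own discussion in Section~\ref{SECTION comparision}: derive conditions (a) and (c) by specializing Theorem~\ref{Theorem wr nilpotent abelian}(b) to finite $B$ (so that $C_{n/d}^\infty\le B$ forces $n/d=1$), and obtain condition (b) separately from the structure theory of Cross product varieties. The first paragraph of your proposal reproduces the paper's argument almost verbatim.

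The divergence is in the handling of condition (b). The paper dispatches \emph{all} of (b) --- both ``$A$ nilpotent'' and ``$B$ abelian'' --- by a single reference to Shmel'kin's theorem \cite[Theorem 6.3]{ShmelkinOnCrossVarieties}: if a product $\U\V$ of non-trivial varieties is generated by a finite group, then $\U$ is nilpotent of finite exponent, $\V$ is abelian of finite exponent, and the exponents are coprime. You invoke only part of this (getting ``$\var{A}$ nilpotent'' and coprimality) and then spend your third paragraph trying to force ``$B$ abelian'' by hand. That paragraph is admittedly a sketch (``I expect'', ``the approach I would take'', ``the delicate step'') and does not actually close the argument; more to the point, it is unnecessary, because the very result you are partially quoting already delivers ``$\var{B}$ abelian'', hence $B$ abelian. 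So the gap in your write-up is real but trivially filled: state Shmel'kin's criterion in full and drop the third paragraph.

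One further caveat worth flagging: treated as an \emph{independent} proof of Theorem~\ref{Theorem wr finite}, your reduction through Theorem~\ref{Theorem wr nilpotent abelian} is circular, since Case~1 in the proof of Theorem~\ref{Theorem wr nilpotent abelian} (Section~\ref{The proof}) explicitly invokes Theorem~\ref{Theorem wr finite}. The paper's Section~\ref{SECTION comparision} is not presenting a new proof but rather showing that the conditions of the two theorems match up in the finite case; Theorem~\ref{Theorem wr finite} itself is established in \cite{shmel'kin criterion}. Your proposal is fine read in that spirit, but you should not advertise it as a self-contained proof.
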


That the condition (b) of Theorem \ref{Theorem wr finite} is necessary, follows from A.L.~Shmel'kin's theorem~\cite[Theorem 6.3]{ShmelkinOnCrossVarieties}. 
But if $A$ is nilpotent (and of finite exponent, since it is finite) and $B$ is abelian, then the point (b) of Theorem~\ref{Theorem wr nilpotent abelian} requires that $B$ contains the direct product $C_{d}^c  \times C_{n/d}^\infty$, where $d$ is the largest divisor of $n$, coprime with $m$. But since $B$ is finite, it contains no nontrivial subgroup $C_{n/d}^\infty$. So we get $n/d = 1$, that is, the condition (a) of Theorem \ref{Theorem wr finite}.

\vskip2mm

The second direction for restriction is to consider Theorem~\ref{Theorem wr nilpotent abelian} for {\it abelian} groups. We had proved:

\begin{thm}[Theorem 6.1 in~\cite{AwrB_paper} or Theorem A in~\cite{Metabelien}]
\label{classification for abelian groups}
For abelian non-trivial groups $A$ and $B$ the equality~\eqref{EQUATION_main} holds if and only if:
\begin{enumerate}
  \item[a)] either at least one of the groups $A$ and $B$ is not of finite  exponent;
  \item[b)] or if  $\exp A = m$  and $\exp B = n$ are both finite, and for every  common prime divisor $p$ of $m$ and $n$ a direct decomposition of the $p$-primary component $B(p)$ of $B$ contains infinitely many direct summands $C_{p^v}$, where $p^v$ is the highest power of $p$ dividing $n$.
\end{enumerate}
\end{thm}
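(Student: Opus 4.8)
Observe first that the present statement is, up to one configuration, the case $c=1$ of Theorem~\ref{Theorem wr nilpotent abelian}: a non-trivial abelian group is precisely a non-trivial nilpotent group of class $1$. The configuration not covered by Theorem~\ref{Theorem wr nilpotent abelian} is the one with $\exp A=\infty$, and it falls under case (a) here. So the plan is (i) to dispose of $\exp A=\infty$ directly, and then (ii) for $\exp A=m<\infty$ to quote Theorem~\ref{Theorem wr nilpotent abelian} with $c=1$ and check that condition (b) there coincides with condition (b) above.

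For $\exp A=\infty$ the abelian group $A$ has cyclic subgroups of arbitrarily large order (or an element of infinite order), so $\var{A}=\A$, and \eqref{EQUATION_main} reads $\var{A\Wr B}=\A\,\var{B}$. The inclusion $\subseteq$ is standard. For $\supseteq$ one uses that a sufficiently large Cartesian power of $A$ contains a copy of $\Z^{\infty}$ (split an infinite index set into infinitely many infinite blocks and pick an element of infinite order in each), whence $\Z^{\infty}\Wr B\in\var{A\Wr B}$ by Lemma~\ref{X*WrY_belongs_var}; since $\Z^{\infty}$ is the countably generated free abelian group it discriminates $\A$, and the discrimination results for wreath products in \cite[\S22]{HannaNeumann} then give $\var{\Z^{\infty}\Wr B}=\A\,\var{B}$. (This case is also treated directly in \cite{AwrB_paper}.)

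Now let $\exp A=m<\infty$, so $A$ is nilpotent of class $c=1$ and Theorem~\ref{Theorem wr nilpotent abelian} applies. If $B$ is not of finite non-zero exponent we are in part (a) of that theorem, which is case (a) here. If $\exp B=n>0$, put $d$ equal to the largest divisor of $n$ coprime with $m$, so that $n/d=\prod_{p\mid\gcd(m,n)}p^{v_p}$, where $p^{v_p}$ is the highest power of $p$ dividing $n$. For $c=1$, condition (b) of Theorem~\ref{Theorem wr nilpotent abelian} says that $B$ contains a subgroup isomorphic to $C_d\times C_{n/d}^{\infty}$. But $C_d\le B$ is automatic: for each prime $p\mid d$ the $p$-primary component $B(p)$ has exponent $p^{v_p}$, hence an element of that order, and such elements taken over the distinct primes dividing $d$ have pairwise coprime orders and generate a copy of $C_d$. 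Moreover $C_{n/d}^{\infty}\cong\prod_{p\mid\gcd(m,n)}C_{p^{v_p}}^{\infty}$, so $C_{n/d}^{\infty}\le B$ is equivalent to $C_{p^{v_p}}^{\infty}\le B(p)$ for every common prime divisor $p$ of $m$ and $n$, i.e.\ (by Pr\"ufer's theorem~\cite{Robinson}) to the direct decomposition of each such $B(p)$ containing infinitely many summands isomorphic to $C_{p^{v_p}}$ --- which is exactly condition (b) of Theorem~\ref{classification for abelian groups}. Hence the two criteria agree, and the statement follows. I do not expect a genuinely hard step here: the substance is already built into Theorem~\ref{Theorem wr nilpotent abelian} and its supporting lemmas, so this proof is essentially bookkeeping. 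The two points that call for a little care are the translation between the ``$C_d\times C_{n/d}^{\infty}$'' and the ``infinitely many copies of $C_{p^{v_p}}$'' forms of condition (b) --- noticing that the $C_d$-factor is free of charge and that the $C_{n/d}^{\infty}$-factor decomposes prime by prime --- and the separate handling of $\exp A=\infty$, which genuinely lies outside Theorem~\ref{Theorem wr nilpotent abelian}; there the one slightly delicate ingredient is the standard fact that a wreath product whose left factor is the countably generated free abelian group $\Z^{\infty}$ generates the full product variety $\A\,\var{B}$.
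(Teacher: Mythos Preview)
Your reduction of the finite-exponent case to Theorem~\ref{Theorem wr nilpotent abelian} with $c=1$ is exactly what the paper does in Section~\ref{SECTION comparision}: the paper does not prove Theorem~\ref{classification for abelian groups} here but cites it from~\cite{AwrB_paper, Metabelien}, then reformulates it as Theorem~\ref{Equivalent form} and observes---just as you do---that for $c=1$ the single factor $C_d$ required by condition~(b) of Theorem~\ref{Theorem wr nilpotent abelian} is automatically present in any abelian $B$ of exponent $n$, so that the two versions of condition~(b) coincide. The paper explicitly calls Theorem~\ref{Theorem wr nilpotent abelian} only a \emph{partial} generalization of Theorem~\ref{classification for abelian groups}, precisely because the case $\exp A=\infty$ lies outside its hypotheses.

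For that remaining case your sketch reaches the correct conclusion, but the justification is loose. The discrimination result you invoke from \cite[\S22]{HannaNeumann} (essentially 22.42, as used in the paper's own proof of Theorem~\ref{Theorem wr nilpotent abelian}(a)) concerns discrimination by the \emph{right} factor, not the left, and there is no general principle that $\var{F_\infty(\U)\Wr B}=\U\,\var{B}$ for arbitrary $B$: by Lemma~\ref{LEMMA condition p groups} this already fails for $\U=\A_2$ and $B=C_2$, even though $F_\infty(\A_2)=C_2^\infty$ certainly discriminates $\A_2$. What saves you here is that $\U=\A$ specifically, and your parenthetical citation of~\cite{AwrB_paper} is doing the real work. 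Since the paper itself defers this case to the same reference, your treatment is in line with the paper's, but the left-factor discrimination step should not be presented as a routine application of \cite[\S22]{HannaNeumann}.
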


To make this theorem more intact with Theorem~\ref{Theorem wr nilpotent abelian} and Theorem~\ref{Theorem wr finite}, let us re-formulate it slightly differently.  
For every  common prime divisor $p$ of $m$ and $n$ the component $B(p)$ contains infinitely many direct summands $C_{p^v}$ if and only if $B(p)$ contains the direct product of all of them, that is, if $B$ contains the direct power $C_{n/d}^\infty$, with $d$ defined in Theorem~\ref{Theorem wr nilpotent abelian}. Thus:

\begin{thm}[an equivalent form for Theorem~\ref{classification for abelian groups}]
\label{Equivalent form}
For abelian non-trivial groups $A$ and $B$ the equality~\eqref{EQUATION_main} holds if and only if:
\begin{enumerate}
  \item[a)] either at least one of the groups $A$ and $B$ is not of finite  exponent;
  \item[b)] or if  $\exp A = m$  and $\exp B = n$ are both finite, and $B$ contains a subgroup isomorphic to the infinite direct power $C_{n/d}^\infty$, where $d$ is the largest divisor of $n$ coprime with $m$.
\end{enumerate}
\end{thm}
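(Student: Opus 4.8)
The plan is to show that Theorem~\ref{Equivalent form} is merely a cosmetic reformulation of Theorem~\ref{classification for abelian groups}, so the only thing to verify is the equivalence of the two versions of condition (b); condition (a) is literally unchanged. Thus the whole argument reduces to a purely arithmetic/combinatorial statement about abelian $p$-groups and the notation $d$ introduced in Theorem~\ref{Theorem wr nilpotent abelian}.

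First I would unwind the definition of $d$: since $\exp A = m$ and $\exp B = n$ are finite, $d$ is the largest divisor of $n$ coprime with $m$, so $n/d$ is exactly the product $\prod_p p^{v_p}$ taken over the \emph{common} prime divisors $p$ of $m$ and $n$, where $p^{v_p}$ is the highest power of $p$ dividing $n$. Then I would invoke Pr\"ufer's theorem: the finite-exponent abelian group $B$ is a direct product of finite cyclic groups, and its $p$-primary component $B(p)$ is a direct product of cyclic groups of $p$-power order, the largest of which has order $p^{v_p}$. The key observation is that for a fixed common prime $p$, the condition ``a direct decomposition of $B(p)$ contains infinitely many summands $C_{p^{v_p}}$'' holds if and only if $B(p)$ contains a subgroup isomorphic to $C_{p^{v_p}}^\infty$ — one direction is immediate by taking the internal direct sum of those summands, the other because any $C_{p^{v_p}}^\infty \le B(p)$ forces, via the structure theory of abelian $p$-groups (e.g. Ulm invariants or the fact that the number of summands of maximal order $p^{v_p}$ in any decomposition is an invariant), infinitely many summands of order $p^{v_p}$ in every decomposition.

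Next I would assemble these componentwise statements over all common primes $p$. Because $B$ restricted to the relevant primes decomposes as the (internal) direct product $\prod_p B(p)$ over the common primes, the simultaneous condition ``for every common prime $p$, $B(p)$ contains $C_{p^{v_p}}^\infty$'' is equivalent to ``$B$ contains $\prod_p C_{p^{v_p}}^\infty \cong \left(\prod_p C_{p^{v_p}}\right)^\infty = C_{n/d}^\infty$,'' using $n/d = \prod_p p^{v_p}$ and the Chinese Remainder decomposition of the cyclic group $C_{n/d}$. This chain of equivalences converts condition (b) of Theorem~\ref{classification for abelian groups} verbatim into condition (b) of Theorem~\ref{Equivalent form}.

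I expect no serious obstacle here — the statement is essentially a bookkeeping exercise — but the one point requiring a little care is the ``only if'' direction of the componentwise equivalence: deducing from a single embedding $C_{p^{v_p}}^\infty \hookrightarrow B(p)$ that \emph{every} direct decomposition of $B(p)$ has infinitely many summands of top order $p^{v_p}$, rather than the summands of that order being ``used up'' by a few large summands. This is handled by the uniqueness (up to isomorphism) of decompositions of abelian $p$-groups into cyclics, or equivalently by noting that $B(p)/B(p)^p$ together with the image of $B(p)^{p^{v_p-1}}$ pins down the cardinality of the set of maximal-order summands as an isomorphism invariant; since $C_{p^{v_p}}^\infty$ already contributes infinitely many such, so must $B(p)$.
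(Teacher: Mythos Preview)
Your proposal is correct and follows essentially the same approach as the paper: the paper's own justification is a single sentence observing that ``for every common prime divisor $p$ of $m$ and $n$ the component $B(p)$ contains infinitely many direct summands $C_{p^v}$'' is equivalent to ``$B$ contains the direct power $C_{n/d}^\infty$,'' and you have simply fleshed out this equivalence with the Pr\"ufer decomposition, the Chinese Remainder identification $C_{n/d}\cong\prod_p C_{p^{v_p}}$, and the invariance of the count of top-order summands. Your extra care on the ``only if'' direction via Ulm invariants is more than the paper supplies, but it fills in exactly the detail the paper leaves implicit.
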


Since here $A$ is nilpotent, we have $c = 1$, and the point (b) in Theorem~\ref{Theorem wr nilpotent abelian}  would require that $B$ contains the product $C_{d}  \times C_{n/d}^\infty$ (just one factor $C_{d}$). So this may seem to be a stronger requirement than the point (b) in Theorem~\ref{Equivalent form}. However, by elementary properties of abelian groups, for any prime divisor $p$ of $n$ the group $B$ contains at least one subgroup isomorphic to $C_{p^v}$. The direct product of some of these $C_{p^v}$ (for all $p$ coprime to $m$) does provide the direct summand $C_{d}$. So for abelian groups Theorem~\ref{Theorem wr nilpotent abelian} is a partial generalization of this case also.

\section{Examples and applications of the criterion}  
\label{examples applications}

Some number of the examples, in which \eqref{EQUATION_main} holds or does not hold for wreath products of abelian groups or of finite groups, can be obtained by Theorem 6.1 in \cite{AwrB_paper}, Theorem A in \cite{Metabelien} or Theorem 1 in \cite{shmel'kin criterion}. 
Here we just give references to them: examples 4.6, 5.4, 6.3, 6.4 in \cite{AwrB_paper}, 
examples 6.9, 7.5, 8.5, 8.6 in \cite{Metabelien},
examples 1, 2 in \cite{shmel'kin criterion} (repetitions allowed). 
The listed examples can be handled and generalized by Theorem \ref{Theorem wr nilpotent abelian}. In particular:

\begin{ex}
\label{EXAMPLE dihedral}
In \cite{Kovacs dihedral} L.G.~Kov{\'a}cs  has computed the variety generated by dihedral group $D_4$ of order $8$:
$\var {D_4}= \A_2^2 \cap \Ni_2$. 
In \cite{shmel'kin criterion} we have seen that for any {\it odd} $n$ the equality \eqref{EQUATION_main} holds for $A = D_4$ and for finite abelian group $B$ of exponent $n$ if and only if $B$ contains $C_n^2$. And, if \eqref{EQUATION_main} holds, then $\var{A \Wr B} = (\A_2^2 \cap \Ni_2)\A_n$.

Now we can generalize this in two directions. First, if $n$ is {\it even}, present it in the form $n = d \cdot 2^h$, where $d$ is odd (we can assume $d\not=0$ since that is the case of odd $n$ covered in \cite{shmel'kin criterion}). Then by Theorem~\ref{Theorem wr nilpotent abelian} \eqref{EQUATION_main} holds for $A = D_4$ and for any abelian group $B$ of finite exponent if and only if $B$ contains the direct product $C_{d}^2 \times C_{2^h}^\infty$ (which, clearly, is isomorphic to $C_{n}^2 \times C_{2^h}^\infty$). So, for instance,   \eqref{EQUATION_main} does not hold if $B = C_6 = C_3 \times C_2$,\, if $B = C_3^\infty \times C_2$ or if $B = C_3^k \times C_2^2$ (for any $k \ge 1$),\, but \eqref{EQUATION_main} does  hold if $B = C_3^\infty \times C_2^7$.

As a second direction for generalization we can replace $A = D_4$  by its finite or infinite direct power $D_4^k$ or $D_4^\infty$. Then the requirement for $B$ remains unchanged.
\end{ex}

\begin{ex}
\label{EXAMPLE quaternion}
The quaternion group $Q_8$ of order $8$ also is nilpotent of class $2$, and it generates the same variety $\var{Q_8} = \var{D_4} = \A_2^2 \cap \Ni_2$ (see~\cite{HannaNeumann}).  Thus examples similar to the points of Example~\ref{EXAMPLE quaternion} can be constructed for the group $A=Q_8$.
\end{ex}

\begin{ex}
\label{EXAMPLE nilpotent}
We above denoted $\Ni_{c,m} = \Ni_c \cap \B_m$. Using simple properties of critical groups from~\cite{Burns65} and Proposition 2 from~\cite{wreath products algebra i logika} we saw in~\cite{wreath products algebra i logika} that 
\eqref{EQUATION_main} does not hold when $A=F_2(\Ni_{2,3})$ and $B = C_2$, and \eqref{EQUATION_main} holds if we take  $B = C_2^k$ (for any $k \ge 2$) instead.
Using the technique of R.G.~Burns with critical groups in~\cite[Section 3]{Burns65} one could build analogs of this example for any variety $\Ni_{2,p}$ and $C_q$, where prime numbers $p$ and $q$ are chosen so that $q$ divides $p-1$.

But much more general cases were covered by Theorem 1 in \cite{shmel'kin criterion}. Namely let $m, n >1$ be any coprime integers, and let $s \ge c$. 
Then for $A=F_s(\Ni_{c,m})$ and for an abelian group $B$ of exponent $n$ the wreath product $A \Wr B$ generates $\Ni_{c,m}\A_n$ if and only if  $B$ contains the direct product $C_n^r$ (we required $s \ge c$ to ensure $\var A=\Ni_{c,m}$).

Again, we can generalize this example in two directions. Firstly, if $n$ is not coprime to $m$, present it as $n=d \cdot n/d$, where $d$ is the largest divisor of $n$, coprime to $m$ (clearly $n/d\not=1$).
In these circumstances \eqref{EQUATION_main} holds with an abelian group $B$ of  exponent $n$, and $A \Wr B$ generates $\Ni_{c,m}\A_n$, if and only if $B$ contains $C_{d}^c \times C_{n/d}^\infty$. 

Secondly, we can consider relatively free groups of $\Ni_{c,m}$ of infinite rank, which were not covered in \cite{shmel'kin criterion}.
By Theorem~\ref{Theorem wr nilpotent abelian} the equality \eqref{EQUATION_main} holds for $A=F_\infty(\Ni_{c,m})$ and for the group $B$ mentioned above if and only if $B$ contains $C_{d}^c \times C_{n/d}^\infty$. 
\end{ex}

Since there is no shortage in examples of nilpotent groups which are either finite or have finite exponents, one may continue this list of examples, as the criterion of Theorem~\ref{Theorem wr nilpotent abelian} is easy-to-use: it just evolves the nilpotency class of $A$ and the direct decomposition of the abelilan group $B$.

\vskip3mm
Theorem~\ref{Theorem wr nilpotent abelian} also allows to add something our old research topic on $\circ$-products $\V \circ B$ and $A \circ \U$ (see the Ph.D. thesis~\cite{Kand diso}, advisor A.Yu.~Ol'shanskii, M.S.U.).
For the given variety $\V$ and the group $B$ the variety $\V \circ B$ is that generated by extensions of all groups from $\V$ by the group $B$, and $A \circ \U$ is the variety, generated by all extensions of the group $A$ by all groups from $\U$. In \cite{Kand diso} we used these $\circ$-products as tools to study the product varieties via consideration of cases, when $\V \circ B = \V \, \var{B}$ or $A \circ \U = \var{A}\U$. Now we have one more case:

\begin{thm}
Let $\V$ be any non-trivial nilpotent variety of finite exponent and let $B$ be any abelian group. Then the equality $\V \circ B = \V \, \var{B}$ holds if and only if  either the group $B$ is not of finite non-zero exponent,\,\,
or if $B$ is of some finite exponent $n > 0$, and it contains a subgroup isomorphic to the direct product $C_{d}^c  \times C_{n/d}^\infty$, where $c$ is the nilpotency class of $\V$, and $d$ is the largest divisor of $n$ coprime with $m$.
\end{thm}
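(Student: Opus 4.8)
The plan is to reduce the statement about $\circ$-products to the already-proved Theorem~\ref{Theorem wr nilpotent abelian}. The key observation is that when $\V$ is a nilpotent variety of finite exponent $m$, the product $\V\,\var{B}$ coincides with $\var{A\Wr B}$ for a suitable generating group $A$ of $\V$, and simultaneously equals $\V\circ B$ exactly when the wreath product ``captures'' all the extensions inside $\V\circ B$. Concretely, since $\V=\Ni_{c,m}$ (or a subvariety thereof) is generated by a single finitely generated group --- e.g.\ $A=F_c(\V)$, its free group of rank equal to the nilpotency class $c$, which by \cite[Corollary 35.12]{HannaNeumann} generates $\V$ --- we have $\var{A}=\V$, and Theorem~\ref{Theorem wr nilpotent abelian} applies verbatim to this $A$ and the given abelian $B$.

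First I would establish the two inclusions that bracket $\V\circ B$. On the one hand, $\V\,\var{B}$ consists of extensions of $\V$-groups by $\var{B}$-groups, so every group in $\V\circ B$ (an extension of a $\V$-group by the single group $B\in\var{B}$) lies in $\V\,\var{B}$; hence $\V\circ B\subseteq\V\,\var{B}$ always. On the other hand, $A\Wr B$ is an extension of the $\V$-group $A^B$ (a Cartesian power of $A$, still in the nilpotent variety $\V$ since $\V$ is closed under Cartesian products) by $B$, so $A\Wr B\in\V\circ B$, and therefore $\var{A\Wr B}\subseteq\V\circ B$. Combining, we get the chain
\[
\var{A\Wr B}\ \subseteq\ \V\circ B\ \subseteq\ \V\,\var{B}=\var{A}\,\var{B}.
\]
Consequently $\V\circ B=\V\,\var{B}$ holds \emph{if and only if} $\var{A\Wr B}=\var{A}\,\var{B}$, i.e.\ exactly when \eqref{EQUATION_main} holds for this $A$ and $B$. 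Now apply Theorem~\ref{Theorem wr nilpotent abelian}: since $\var{A}=\V$ has nilpotency class $c$ and exponent $m$, the equality holds precisely under the stated condition on $B$ --- either $B$ is not of finite non-zero exponent, or $B$ has finite exponent $n>0$ and contains $C_d^c\times C_{n/d}^\infty$ with $d$ the largest divisor of $n$ coprime to $m$. This is exactly the claimed criterion.

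The only genuinely delicate point is making sure the ``class'' parameter $c$ appearing in the criterion is the \emph{same} whether computed from $\V$ or from the chosen generator $A$. This is immediate for $A=F_c(\V)$, whose nilpotency class equals that of $\V$ by definition of the nilpotency class of a variety; one should also note that the criterion is insensitive to the particular finitely generated generator chosen (any two generate the same variety, hence the same $c$, $m$), so there is no ambiguity. A minor housekeeping remark is the edge case $\V$ trivial, which is excluded by hypothesis, and the case where $B$ itself is trivial, where both sides reduce to $\V$ and the criterion holds vacuously (the trivial group is not of finite non-zero exponent, matching clause~(a)). With these observations the proof is complete; the substance is entirely carried by Theorem~\ref{Theorem wr nilpotent abelian}, and the present statement is a direct corollary once the sandwiching inclusions above are in place.
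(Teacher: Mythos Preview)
Your overall strategy---choose $A=F_c(\V)$ so that $\var{A}=\V$, sandwich $\V\circ B$ between $\var{A\Wr B}$ and $\V\,\var{B}$, and then invoke Theorem~\ref{Theorem wr nilpotent abelian}---is exactly the reduction the paper has in mind (the paper states the theorem as an application of Theorem~\ref{Theorem wr nilpotent abelian} without spelling out a proof). However, there is a genuine gap in your argument at the point where you write ``Consequently $\V\circ B=\V\,\var{B}$ holds \emph{if and only if} $\var{A\Wr B}=\var{A}\,\var{B}$.'' The chain
\[
\var{A\Wr B}\ \subseteq\ \V\circ B\ \subseteq\ \V\,\var{B}
\]
only yields the implication in one direction: if the two outer terms coincide, then certainly $\V\circ B=\V\,\var{B}$. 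But the converse---that $\V\circ B=\V\,\var{B}$ forces $\var{A\Wr B}=\V\,\var{B}$---does not follow from the sandwich alone, and that converse is precisely what you need for the necessity half of the theorem.

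The fix is short but essential: show that the \emph{first} inclusion is always an equality, i.e.\ $\V\circ B=\var{A\Wr B}$. Any generator of $\V\circ B$ is an extension $E$ of some $N\in\V$ by $B$; by the Kaloujnine--Krasner theorem $E$ embeds in $N\Wr B$. Since $N\in\var{A}={\sf QSC}\{A\}$, repeated application of Lemma~\ref{X*WrY_belongs_var} gives $N\Wr B\in\var{A\Wr B}$, hence $E\in\var{A\Wr B}$ and $\V\circ B\subseteq\var{A\Wr B}$. With this in hand the ``if and only if'' becomes trivial, and the rest of your argument (including the checks that $A=F_c(\V)$ has class exactly $c$ and exponent exactly $m$) goes through.
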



\vskip4mm

\noindent
e-mail: v.mikaelian@gmail.com 

\end{document}